\newtheorem{prop}{Proposition}[section]
\newtheorem{lem}[prop]{Lemma}
\newtheorem{thm}[prop]{Theorem}
\newtheorem{cor}[prop]{Corollary}
\newtheorem{remar}[prop]{Remark}
\newtheorem{exe}[prop]{Example}
\begin{document}

\title{Elliptic curves of bounded degree \\ in a polarized Abelian variety}
\author{Lucio Guerra}
\date{}
\maketitle

\begin{abstract} \noindent
For a polarized complex Abelian variety $A$, of dimension $g>1$,
we study the function $N_A(t)$ counting the number of elliptic curves in $A$
with degree bounded by $t$. We describe elliptic curves as solutions of 
Diophantine equations which, at least for small dimensions $g=2$ and $g=3$,
can actually be made explicit, and we show that computing
the number of solutions is reduced to the classical topic 
in Number Theory of counting points of the lattice $\mathbb Z^n$ 
lying on an explicit bounded subset of $\mathbb R^n$.
We obtain, for Abelian varieties of small dimension,
some upper bounds for the counting function.
\medskip

\noindent {\sc M.S.C. \  14K20 \  11D45}
\end{abstract}

\section*{Introduction}

Let $A$ be a complex torus, of dimension $g>1$.
Elliptic curves in $A$ are translates of one-dimensional subtori. 
We choose the following terminology: when we use the expression
'elliptic curve in a complex torus' we always mean a one-dimensional subtorus.
The collection of all elliptic curves in $A$ is (at most) countable
(and possibly empty). Assume that $A$ is a complex Abelian variety, endowed with a
polarization. Every algebraic curve in $A$ has a degree with respect to the polarization, 
and the following finiteness theorem holds:
{for every integer $t \geq 1$ the collection of elliptic curves $E \subset A$
such that $\deg(E) \leq t$ is finite.}

In dimension $g=2$ this was known to Bolza and Poincar\'e, and
a modern account is in the paper of Kani \cite{K2}. For Jacobian varieties $J(C)$
of arbitrary dimension the theorem was proved by Tamme and was brought to
an effective form in another paper of Kani \cite{K1}. 
For an arbitrary Abelian variety $A$ the theorem follows from a
general result proved by Birkenhake and Lange in \cite{BL},
to the effect that the collection of all Abelian subvarieties with
bounded exponent in $A$ is finite.

Denote by $N_A(t)$ the number of elliptic curves in $A$
with degree bounded by $t$. The aim in the present paper is to study
the function $N_A(t)$, for small dimensions $g=2$ and $g=3$.
The problem of bounding this function is invariant under isogenies,
and the most relevant case is when $A$
is the self product $E^g$ of an elliptic curve, and the polarization
is the product of a sequence of pullback polarizations from the factors.
When we consider $E^g$ as a polarized Abelian variety 
we always assume that it is endowed with such a product polarization.

There is always in $E^g$ an infinite collection of elliptic curves, that we call
{\em ordinary}. We show that there are {\em extra-ordinary} elliptic curves 
(in fact infinitely many) if and only if $E$ admits complex multiplication.
These results are given in several steps in \S \ref{Eg}.
We show that computing elliptic curves in $E^g$ is reduced to solving
certain Diophantine (systems of) equations, in terms of coordinates in
$H_2(E^g, \mathbb Z)$, and the main achievement in the present paper
is that these equations can actually be made explicit
(Theorem \ref{g=2} for $g=2$, Theorem \ref{g=3} for $g=3$).

It turns out that computing $N_{E^g}(t)$ is reduced to 
counting points of the lattice $\mathbb Z^n$ 
lying on an explicit bounded subset of $\mathbb R^n$, for some $n$ depending on $g$.
This is a classical topic in Number Theory, originating from Gauss' circle problem
and still a field of active research. So we are lead to apply some results from that field,
and in this way we can give some explicit upper bounds for the function $N_{E^g}(t)$
(Proposition \ref{Ndim2} for $g=2$, Proposition \ref{Ndim3} for $g=3$).
Finally the results for a self product easily imply results for an arbitrary 
polarized Abelian variety of small dimension, and we remark that 
the estimates for $N_A(t)$ obtained in this way are, at least in the rate of growth, 
sharper than certain existing bounds (see \S \ref{Ag=23}).

\section{Some preliminary material}

\subsection{Elliptic curves and homology classes} \label{homology}

Let $A$ be an Abelian variety. There is a natural isomorphism
$$H_1(A,\mathbb Z) \wedge H_1(A,\mathbb Z) 
\overset{_\sim}{\longrightarrow} H_2(A,\mathbb Z)$$
such that $\lambda\wedge\mu \mapsto \lambda\star\mu$
where $\star$ is the Pontryagin product of 1-cycles
(cf. \cite{CAV}, either \S 1.5 (8) or else Lemma 4.10.1).

Let $E$ be an elliptic curve in $A$ (a one-dimensional subtorus).
It turns out that a basis of $H_1(E,\mathbb Z)$ is symplectic,
for the principal polarization or equivalently for every polarization, if and only if is
negatively oriented, with respect to the canonical orientation of the complex
vector space $H_1(E,\mathbb C)$ (cf. \cite{CAV}, Lemma 3.6.5).

If $\lambda,\mu$ form a symplectic basis in $H_1(E,\mathbb Z)$,
the fundamental class of $E$ in $H_2(E, \mathbb Z)$ is given by
$\{ E \} = - \lambda \star \mu$  (cf. \cite{CAV}, Cor. 4.10.5)
and by functoriality the same holds in $H_2(A, \mathbb Z)$.
Therefore, if $\lambda,\mu$ form a positively oriented basis, 
then the fundamental class is  $$\{ E \} = \lambda \star \mu.$$

The correspondence sending an elliptic curve $E \subset A$
to the homology class $\{ E \} \in H_2(A, \mathbb Z)$ is injective.
This is certainly well known, however a proof is also given later on
in Remark \ref{injective}.

\subsection{Degree with respect to the polarization} \label{degpol}

Let $A$ be a polarized Abelian variety. 
The element of $H^2(A,\mathbb Z)$ corresponding to the polarization,
viewed as a bilinear antisymmetric form on $H_1(A,\mathbb Z)$,
will be written as a bracket $(-,-)$. 
Let $\Theta$ be a divisor in the associated linear system. 

For an elliptic curve $E \!\subset\! A$ the degree with respect to the polarization 
is $$\deg(E) = E \cdot \Theta.$$ 
Assume that $\lambda,\mu$ form a symplectic basis in $H_1(E,\mathbb Z)$. 
The degree of $E$ is also described as  $$E \cdot \Theta = (\lambda, \mu)$$
and may therefore be seen as the degree of the induced polarization on $E$.

This follows from the equality
$(\lambda \star \mu) \cdot \Theta = - (\lambda, \mu).$
If, in the inclusion $H_1(E,\mathbb Z) \rightarrow H_1(A,\mathbb Z)$, the elements
$\lambda,\mu$ become part of a symplectic basis of $H_1(A,\mathbb Z)$,
this is found in \cite{CAV}, Lemma 4.10.3, and in general it follows by bilinearity.

\subsection{Behaviour under isogenies} \label{isogenies}

Let $A,B$ be polarized Abelian varieties and let $B \rightarrow A$ be an isogeny, 
preserving the polarizations, whose degree we call $d$.
There is a one to one correspondence between elliptic curves in $A$ and in $B$.

Given $E \subset A$ the corresponding $E^\ast$ in $B$ is
the connected component of $0$ in the pre-image of $E$. 
The restricted isogeny $E^\ast \rightarrow E$ has degree $d_E \leq d$
(in fact a divisor of $d$), and the degree of $E^\ast$ is given by
\begin{equation*}
\deg(E^\ast) = d_E\, \deg(E)
\end{equation*}
(by the projection formula:
$E^\ast \cdot f^\ast \Theta = 
f_\ast E^\ast \cdot \Theta = d_E\; E \cdot \Theta$).
Therefore:
\begin{equation*}
\deg(E) \leq \deg(E^\ast) \leq d\, \deg(E).
\end{equation*}

It follows that the functions counting elliptic curves in $A$ and in $B$
are related by the following inequalities:
$$N_A(t) \leq N_B(dt) {\rm \ \ \ and \ \ \ } N_B(t) \leq  N_A(t),$$
and this has the following remarkable consequence:
the property that every collection of elliptic curves of bounded degree in $A$ is finite and
the property that the counting function $N_A(t)$ is asymptotically of type $O(t^\alpha)$
are invariant under polarization preserving isogenies of $A$.

\subsection{The finiteness theorem} \label{finthm}

Let $A$ be a polarized Abelian variety, of dimension $g>1$. \bigskip

\noindent {\bf Theorem.}
{\em For every integer $t \geq 1$ the collection of elliptic curves $E \subset A$
such that $\deg(E) \leq t$ is finite.} \bigskip

This is a consequence of a general result proved by
Birkenhake and Lange \linebreak in \cite{BL}, to the effect that 
the collection of all Abelian subvarieties with bounded exponent in $A$ is finite. 
Let us recall some definitions. 
The polarization defines a natural isogeny $\phi: A \rightarrow \widehat A$ 
to the dual variety. The order of ${\rm ker}(\phi)$ is the degree of the polarization
and the exponent of ${\rm ker}(\phi)$ is called the exponent of the polarization on $A$.
Clearly the exponent is a divisor of the degree.
For an Abelian subvariety $E$ of $A$ one has the exponent and the degree of the
induced polarization. 
If $E$ is an elliptic curve in $A$ we know that the
degree of the curve is equal to the degree of the induced polarization.
So elliptic curves with bounded degree have bounded exponent, and the theorem follows.

Assume now that $A = J(C)$ is the Jacobian variety of a curve
of genus $g>1$, with the canonical polarization.

In this case, elliptic curves in $J(C)$ correspond bijectively to isomorphism classes of
surjective morphisms $C \rightarrow E$, with $E$ an elliptic curve,
which do not factor as $C \rightarrow E' \rightarrow E$, where
$E' \rightarrow E$ is surjective of positive degree.
(Here two morphisms from $C$ to $E$ and to $E'$ are said to be isomorphic
if they are related by some isomorphism $E' \rightarrow E$.)
Associated to $f:C \rightarrow E$ is the pull-back homomorphism
$f^\ast: E \cong {\rm Pic}^0(E) \longrightarrow {\rm Pic}^0(C) \cong J(C)$
and the image $f^\ast(E)$ is the corresponding elliptic curve in $J(C)$.
The degree of this curve is $$f^\ast(E) \cdot \Theta = \deg (f).$$
In fact, one has $f^\ast(E) \cdot \Theta = \deg\, (f^\ast)^\ast (\Theta)$
and, since $(f^\ast)^\ast (\Theta)$ is analytically equivalent to $\deg (f)\, \Theta'$
where $\Theta'$ is a theta divisor of ${\rm Pic}^0(E)$ (cf. \cite{CAV}, Lemma 12.3.1),
then $\deg\, (f^\ast)^\ast (\Theta) = \deg (f)$.

Therefore, as a corollary of the theorem above it follows that: 
for every integer $t \geq 1$ the collection of 
isomorphism classes of morphisms $f: C \rightarrow E$ which do not factor 
through a non-trivial covering of $E$ and have $\deg (f) \leq t$ is finite.
This was first proved by Tamme and was brought to an effective form
in the paper of Kani \cite{K1}.

\section{Elliptic curves in a complex torus}

Let $\Lambda$ be a free module of finite rank over $\mathbb Z$.
A rank 2 submodule can be presented giving a basis $\lambda,\mu$.
If the submodule has to be maximal among rank 2 submodules,
we may assume that $\lambda$ is a primitive element.
In this setting, it is easy to see that the following are equivalent conditions:
\begin{itemize}
\item[$-$]
$\mu$ is primitive mod $\lambda$, i.e. represents a primitive
element in $\Lambda / \mathbb Z \lambda$;
\item[$-$]
every $\mu + t \lambda$ is primitive in $\Lambda$;
\item[$-$]
$\langle \lambda,\mu \rangle$ is maximal among
rank two submodules of $\Lambda$.
\end{itemize}
In general, every nonzero element in a free module over $\mathbb Z$ has
an {\em integral content}, that by definition is in $\mathbb Z_{>0}$. 

\begin{lem} \label{content}
In the setting above, the content of $[\mu]$ in $\Lambda / \mathbb Z \lambda$ 
and the content of $\lambda \wedge \mu$ in $\Lambda \wedge \Lambda$ coincide.
\end{lem}

\begin{proof}
It is enough to prove that for an integer $r$ one has that:
$r$ divides $[\mu]$ in $\Lambda / \mathbb Z \lambda$ if and only if
$r$ divides $\lambda \wedge \mu$ in $\Lambda \wedge \Lambda$.

If $\mu = u \lambda + r \mu'$ then $\lambda \wedge \mu = r\, \lambda \wedge \mu'$. 
Conversely, assume that $\lambda \wedge \mu = r \omega$. 
Write $r = p s$ with $p$ prime. Note that
$(\Lambda \wedge \Lambda) / p (\Lambda \wedge \Lambda)
\cong (\Lambda /p \Lambda) \wedge (\Lambda /p \Lambda)$ 
is a vector space over $\mathbb Z / p \mathbb Z$. Here one has
$\lambda \wedge \mu \equiv 0$ and since $\lambda \not\equiv 0$
in $\Lambda /p \Lambda$ it follows that $\mu = c \lambda + p \nu$.
Thus $\lambda \wedge \mu = p\, \lambda \wedge \nu$ with
$\lambda \wedge \nu = s \omega$.
Then write $s = p' t$ with $p'$ prime. By the same argument we have that
$\nu = c' \lambda + p' \nu'$ and $\lambda \wedge \nu' = t \omega$, and
hence $\mu = c \lambda + p (c'\lambda + p'\nu') = (c+pc')\lambda + pp' \nu'$.
Iterating on the sequence of prime factors of $r$ one ends with 
$\mu =  u\lambda + r \mu'$ for some $u$ and $\mu'$
(and note, by the way, that $\lambda \wedge \mu' = \omega$).
\end{proof}

\begin{cor} \label{primitive}
In the setting above, the following are equivalent conditions:
\begin{itemize}
\item[$-$]
$\mu$ is primitive mod $\lambda$;
\item[$-$]
$\lambda \wedge \mu$ is primitive in $\Lambda \wedge \Lambda$.
\end{itemize}
\end{cor}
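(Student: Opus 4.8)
The plan is to deduce this directly from Lemma \ref{content}, using the elementary observation that a nonzero element $x$ of a free $\mathbb Z$-module $M$ is primitive precisely when its integral content equals $1$. Indeed, by definition the content $c(x)$ is the largest positive integer $r$ with $x \in rM$, so $c(x)=1$ holds exactly when $x$ is not a proper integer multiple of another element, which is the very definition of primitivity. Thus the whole corollary reduces to comparing two contents, and that comparison is already supplied by the lemma.

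First I would check that both elements under consideration are nonzero, so that their contents are well defined. Since $\lambda,\mu$ form a basis of a rank $2$ submodule they are $\mathbb Z$-linearly independent; hence $\lambda\wedge\mu\neq 0$ in $\Lambda\wedge\Lambda$, and $[\mu]\neq 0$ in $\Lambda/\mathbb Z\lambda$, the latter because $\mu\notin\mathbb Z\lambda$. With this in place the two contents $c([\mu])$ and $c(\lambda\wedge\mu)$ are genuine positive integers to which the lemma applies.

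The argument is then immediate: to say that $\mu$ is primitive mod $\lambda$ is to say that $c([\mu])=1$ in $\Lambda/\mathbb Z\lambda$, while to say that $\lambda\wedge\mu$ is primitive is to say that $c(\lambda\wedge\mu)=1$ in $\Lambda\wedge\Lambda$. By Lemma \ref{content} these two contents coincide, so one of them equals $1$ if and only if the other does, which is exactly the asserted equivalence.

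Since Lemma \ref{content} carries all the substantive work, I expect no real obstacle here; the only point requiring any care is the bookkeeping around the notion of content, namely that it is a well-defined element of $\mathbb Z_{>0}$ for nonzero elements and that primitivity is precisely the condition that this value be $1$. Once that is granted, the corollary follows formally.
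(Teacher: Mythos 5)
Your proof is correct and matches the paper's reasoning: the corollary is stated there as an immediate consequence of Lemma \ref{content}, precisely because primitivity of a nonzero element is the same as its integral content being $1$, so the equality of the two contents yields the equivalence. Your added care about nonvanishing of $[\mu]$ and $\lambda\wedge\mu$ is sound and fills in exactly the bookkeeping the paper leaves implicit.
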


Assume now that $\Lambda$ is a lattice in $\mathbb C^g$. 
There is a natural $\mathbb Z$-linear homomorphism
$\Lambda \wedge \Lambda \rightarrow \mathbb C^g \wedge \mathbb C^g$.
We characterize pairs $\lambda,\mu$ which generate the submodule of $\Lambda$
corresponding to an elliptic curve in the complex torus $\mathbb C^g / \Lambda$.

\begin{prop} \label{ellipticsublattice}
In the setting above, $\lambda,\mu$ is a basis for the lattice of an elliptic curve
in the complex torus if and only if:
\begin{itemize}
\item[$-$]
$\lambda \wedge \mu$ maps to $0$ in the homomorphism
$\Lambda \wedge \Lambda \rightarrow \mathbb C^g \wedge \mathbb C^g$, and
\item[$-$]
$\lambda \wedge \mu$ is primitive in $\Lambda \wedge \Lambda$.
\end{itemize}
\end{prop}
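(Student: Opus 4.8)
The plan is to read off from each of the two bulleted conditions a geometric statement about the subgroup $M = \langle \lambda, \mu \rangle$ of $\Lambda$ and the subspaces it spans, and then to match these against the description of a one-dimensional subtorus. Recall that an elliptic curve in $\mathbb C^g/\Lambda$ is a subtorus $V/(V \cap \Lambda)$, where $V \subset \mathbb C^g$ is a complex line and $V \cap \Lambda$ is a rank $2$ lattice in $V$; saying that $\lambda,\mu$ is a basis of its lattice means exactly that $M = V \cap \Lambda$.

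First I would interpret the first condition. In the homomorphism $\Lambda \wedge \Lambda \to \mathbb C^g \wedge \mathbb C^g$ the target is the exterior square over $\mathbb C$, and $\lambda \wedge \mu \mapsto \lambda \wedge_{\mathbb C} \mu$, which vanishes exactly when $\lambda$ and $\mu$ are linearly dependent over $\mathbb C$. Thus the first condition says that $\lambda$ and $\mu$ lie on a common complex line $V = \mathbb C \lambda$. By itself this does not yet exclude the degenerate possibility that $\lambda$ and $\mu$ are dependent already over $\mathbb R$.

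Next I would handle the second condition by invoking Corollary \ref{primitive}: primitivity of $\lambda \wedge \mu$ in $\Lambda \wedge \Lambda$ is equivalent to $\mu$ being primitive mod $\lambda$, hence, by the equivalences recorded before Lemma \ref{content}, to $M$ being maximal among rank $2$ submodules of $\Lambda$, i.e.\ saturated. I would also observe that primitivity forces $\lambda \wedge \mu \neq 0$ in $\Lambda \wedge \Lambda$, so $\lambda$ and $\mu$ are independent over $\mathbb Q$, equivalently over $\mathbb R$ since they have integral coordinates in a $\mathbb Z$-basis of $\Lambda$. This is what rules out the degenerate case and makes $\lambda,\mu$ a genuine $\mathbb R$-basis of the line $V$.

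It then remains to assemble both implications. For the converse direction, given the two conditions, $V \cap \Lambda$ is a rank $2$ sublattice containing $M$, and it is saturated because it is the intersection of $\Lambda$ with a linear subspace; being the saturation of the already saturated $M$, it equals $M$, so $V/M$ is an elliptic curve with basis $\lambda,\mu$. For the direct direction, if $\lambda,\mu$ is a basis of the lattice $V \cap \Lambda$ of an elliptic curve, then $\lambda,\mu \in V$ forces $\lambda \wedge_{\mathbb C}\mu = 0$, and $V \cap \Lambda$ is saturated in $\Lambda$, giving the two conditions. The only point demanding care --- the main obstacle --- is the bookkeeping of real versus complex linear dependence: the first condition produces only complex collinearity, and one must use the non-vanishing extracted from the second condition to promote this to a bona fide complex line on which $\lambda,\mu$ is a real basis.
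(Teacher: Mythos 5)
Your proposal is correct and takes essentially the same route as the paper's proof: you read the vanishing of $\lambda \wedge \mu$ in $\mathbb C^g \wedge \mathbb C^g$ as saying that $\lambda,\mu$ span a complex line $L$, invoke Corollary~\ref{primitive} to translate primitivity of $\lambda \wedge \mu$ into maximality of $\langle \lambda,\mu \rangle$ among rank two submodules, and conclude that $\langle \lambda,\mu \rangle = L \cap \Lambda$ is the lattice of a subtorus. Your additional bookkeeping on real versus complex dependence (using primitivity to force $\lambda \wedge \mu \neq 0$, hence $\mathbb R$-independence) only makes explicit a point the paper's terser proof leaves implicit, and is sound.
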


\begin{proof}
The first property means that $\lambda,\mu$ generate a complex
line $L$ in $\mathbb C^g$. Clearly
$\langle \lambda,\mu \rangle \subseteq L \cap \Lambda$.
By Corollary \ref{primitive}, the second property means that 
$\langle \lambda,\mu \rangle$ is maximal. Therefore 
$\langle \lambda,\mu \rangle = L \cap \Lambda$
is the lattice of a complex subtorus.
\end{proof}

Let $E$ be an elliptic curve in $A = \mathbb C^g / \Lambda$. There is
a complex line $L$ in $\mathbb C^g$ such that $L \cap \Lambda$
is a rank 2 submodule of $\Lambda$, and such that $E = L/(L \cap \Lambda)$. 
A basis $\lambda, \mu$ of $L \cap \Lambda$ defines
a bivector $\lambda \wedge \mu$ in $\Lambda \wedge \Lambda$,
determined up to $\pm 1$, hence all positive bases (with respect to the
canonical orientation of $L \cong \mathbb C$) determine
one and the same bivector.

The natural isomorphism $\Lambda \overset{_\sim}{\rightarrow} H_1(A, \mathbb Z)$,
that we write for a while as $\lambda \mapsto \lambda'$,
and the Pontryagin isomorphism $H_1(A, \mathbb Z) \wedge H_1(A, \mathbb Z) 
\overset{_\sim}{\rightarrow} H_2(A, \mathbb Z)$
(see \S \ref{homology}) determine the isomorphism
$$\Lambda \wedge \Lambda \overset{_\sim}{\longrightarrow} H_2(A, \mathbb Z)$$
such that $\lambda \wedge \mu \mapsto \lambda' \star \mu'$.
So, if $E$ is an elliptic curve in $A$, the associated bivector $\lambda \wedge \mu$
corresponds to the fundamental class $\{ E \}$.

\begin{remar} \label{injective} \em
The correspondence sending an elliptic curve $E$ in $A$ to the homology class
$\{E\}$ in $H_2(A,\mathbb Z)$ is injective.
In other words the submodule $\langle \lambda, \mu \rangle$ of $\Lambda$
corresponding to $E$ is reconstructed from the bivector $\lambda \wedge \mu$
in $\Lambda \wedge \Lambda$, which is primitive by Proposition \ref{ellipticsublattice} 
above. If $\lambda \wedge \mu$ is primitive then
$\langle \lambda, \mu \rangle$ is maximal, by Corollary \ref{primitive},
hence coincides with $\mathbb Q \langle \lambda, \mu \rangle \cap \Lambda$
and is uniquely determined by $\lambda \wedge \mu$.
\end{remar}

\section{In the self product of an elliptic curve} \label{Eg}

\subsection{Elliptic curves from endomorphisms} \label{endomorphisms}

Let $E$ be an elliptic curve.
In the self product $E^g$ there are infinitely many elliptic curves.
For every vector $v = (v_1, \ldots, v_g)$ in $\mathbb Z^g$
one has the homomorphism $E \rightarrow E^g$ given by 
$x \mapsto (v_1 x, \ldots, v_g x)$, and if $v$ is primitive
then this is an embedding of $E$ as an elliptic curve $E_v$ in $E^g$.  
We call these the {\em ordinary} elliptic curves in $E^g$. 
They correspond to the bivectors of $\Lambda \wedge \Lambda$ of the form 
$$v \wedge \tau v$$  with $v \in \mathbb Z^g$ primitive. 
In fact, $E_v$ is given by the submodule $\langle v, \tau v \rangle$ of $\Lambda$,
and the basis $v, \tau v$ is positively oriented (since ${\rm im}(\tau) > 0$).

Consider a polarization on $E^g$ which decomposes as a product of pullback polarizations
from the factors, whose theta divisor is of the form
$$p_1^\ast (m_1\Theta) + \cdots + p_g^\ast (m_g\Theta)$$
where $\Theta$ is the theta divisor of the principal polarization on $E$,
and $m_1,\ldots,m_g$ are positive integers.
Then the degree of an ordinary elliptic curve is 
$$\deg (E_v) = m_1v_1^2 + \cdots + m_gv_g^2$$
for one has $E_v \cdot p_i^\ast (\Theta) = \deg \mu_{v_i}^\ast(0) = v_i^2$.

\begin{exe} \label{Eord} \em
We list the ordinary elliptic curves in $E^g$ of degree $\leq g$ 
with respect to the principal polarization (every $m_i = 1$), 
for $g=2$ and $g=3$.  

The ordinary elliptic curves of degree $\leq 2$ in $E^2$ are the two factors, 
with degree = 1, arising from the pairs $(1,0),(0,1)$, the diagonal and 
the anti-diagonal curves, with degree = 2, arising from $(1,1),(1,-1)$.

The ordinary elliptic curves of degree $\leq 3$ in $E^3$ are:
the three coordinate copies of $E$,
with degree = 1, arising from $(1,0,0)$, $(0,1,0)$, $(0,0,1)$;
in each coordinate copy of $E^2$
the diagonal and the anti-diagonal curves, with degree = 2,
arising from $(1, \pm 1, 0)$, $(1,0, \pm 1)$, $(0,1, \pm 1)$;
the four ordinary curves, with degree = 3, arising from $(1, \pm 1, \pm 1)$.
Summing up: 3 + 6 + 4 = 13 ordinary elliptic curves.
\end{exe}

The construction above can be extended using endomorphisms.
Every sequence $(\phi_1, \ldots, \phi_g)$ in ${\rm End}(E)^g$
defines a homomorphism $\phi : E \rightarrow E^g$.
This is an embedding if and only if the only factorizations
$(\phi_1, \ldots, \phi_g) = (\psi_1, \ldots, \psi_g) \circ \sigma$
with every $\psi_i$ and $\sigma$ in ${\rm End}(E)$
are those with $\sigma$ belonging to ${\rm Aut}(E)$.
This is the case, for instance, if some $\phi_i$ is an automorphism.
The image $\phi(E)$ is an elliptic curve in $E^g$, isomorphic to $E$. 
Two sequences $\phi$ and $\psi$ as above determine the same curve 
if and only if there is some automorphism $\sigma$ in ${\rm Aut}(E)$
such that $(\phi_1, \ldots, \phi_g) = (\psi_1, \ldots, \psi_g) \circ \sigma$.

If $\phi$ arises from some $(v_1,\ldots,v_g)$ belonging to $\mathbb Z^g$
then $\phi(E)$ is an ordinary elliptic curve $E_v$. If instead every $\phi \circ \sigma$ 
with $\sigma \in {\rm Aut}(E)$ does not come from  $\mathbb Z^g$ 
then one has an extra-ordinary elliptic curve.
This of course requires that ${\rm End}(E)\neq \mathbb Z$, 
i.e. that $E$ has complex multiplication.

\begin{prop}
If $E$ has no complex multiplication, there are in $E^g$ 
only ordinary elliptic curves.
\end{prop}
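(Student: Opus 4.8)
The plan is to realize every elliptic curve in $E^g$ as a complex line in the universal cover and to show that, absent complex multiplication, such a line must be rational. Write $E = \mathbb C / \Lambda_E$ with $\Lambda_E = \mathbb Z + \mathbb Z \tau$, so that $E^g = \mathbb C^g / \Lambda$ with $\Lambda = \Lambda_E^{\,g}$ and with $\{e_k, \tau e_k\}$ a $\mathbb Z$-basis of $\Lambda$. By the discussion around Proposition \ref{ellipticsublattice}, an elliptic curve $E' \subset E^g$ has the form $L/(L \cap \Lambda)$ for a complex line $L \subset \mathbb C^g$ with $L \cap \Lambda$ of rank $2$. First I would write $L = \mathbb C\, w$ with $w = (w_1, \dots, w_g)$ and, after reordering the factors and rescaling, assume $w_1 = 1$.

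The key step is to extract arithmetic information about the $w_k$ from the rank condition. A lattice point of $\Lambda$ lying on $L$ has the shape $c\,w$, where $c$ is its first coordinate, so that it lies in $\Lambda$ exactly when $c \in \Lambda_E$ and $c\, w_k \in \Lambda_E$ for every $k$; hence $L \cap \Lambda$ is carried bijectively onto the subgroup $\Gamma = \{ c \in \Lambda_E : c\, w_k \in \Lambda_E \text{ for all } k \}$ of $\Lambda_E$. That $L \cap \Lambda$ has rank $2$ means $\Gamma$ has finite index in $\Lambda_E$, so $n \Lambda_E \subseteq \Gamma$ for some integer $n \geq 1$. Then $(n w_k)\, \Lambda_E \subseteq \Lambda_E$ for every $k$, which says precisely that $n w_k$ lies in ${\rm End}(E)$, and therefore each $w_k$ lies in the endomorphism algebra ${\rm End}(E)\otimes\mathbb Q$.

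Now I would invoke the hypothesis: with no complex multiplication ${\rm End}(E) = \mathbb Z$, so ${\rm End}(E)\otimes\mathbb Q = \mathbb Q$ and every $w_k \in \mathbb Q$. Clearing denominators, $L = \mathbb C\, v$ for a primitive $v \in \mathbb Z^g$, and a direct check (using that $\gcd(v_k)=1$ makes the condition $c\,v_k \in \Lambda_E$ for all $k$ equivalent to $c \in \Lambda_E$) gives $L \cap \Lambda = \langle v, \tau v \rangle$. Thus $E'$ is exactly the ordinary curve $E_v$, and the proof is complete.

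I expect the crux to be the middle step, namely recognizing that the finite-index, rank-$2$ condition on $L \cap \Lambda$ is what forces the slopes $w_k$ into ${\rm End}(E)\otimes\mathbb Q$; everything then hinges on the elementary fact that this algebra is just $\mathbb Q$ in the non-CM case. As an alternative I could argue entirely within the bivector formalism of Proposition \ref{ellipticsublattice}: expanding $\{E'\}$ in the basis of $\Lambda \wedge \Lambda$ as $\sum_{i<j} a_{ij}\, e_i\wedge e_j + \sum_{i,j} b_{ij}\, e_i \wedge \tau e_j + \sum_{i<j} c_{ij}\, \tau e_i \wedge \tau e_j$ and computing its image in $\mathbb C^g \wedge_{\mathbb C} \mathbb C^g$, the vanishing of that image produces in each coordinate a relation $a_{ij} + (b_{ij} - b_{ji})\,\tau + c_{ij}\,\tau^2 = 0$. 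The non-CM hypothesis is exactly the $\mathbb Q$-linear independence of $1, \tau, \tau^2$, which kills the $e_i \wedge e_j$ and $\tau e_i \wedge \tau e_j$ parts and symmetrizes the matrix $(b_{ij})$; decomposability of the bivector then forces it to have rank one, i.e. $(b_{ij}) = v\,v^{\mathsf T}$, so $\{E'\} = v \wedge \tau v$.
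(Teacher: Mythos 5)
Your proof is correct, but it takes a genuinely different route from the paper's. The paper argues at the level of homomorphisms of abelian varieties: any elliptic curve in $E^g$ is isogenous to $E$ and hence is the image of a homomorphism $\phi = (\phi_1, \ldots, \phi_g) : E \rightarrow E^g$ with each $\phi_i \in {\rm End}(E)$; without complex multiplication ${\rm End}(E) = \mathbb Z$, so $\phi$ comes from some $v \in \mathbb Z^g$, and writing $v = d v'$ with $v'$ primitive gives $\phi(E) = E_{v'}$. You instead work entirely in the universal cover: the rank-$2$ (finite-index) condition on $L \cap \Lambda$ forces each slope $w_k$ into ${\rm End}(E) \otimes \mathbb Q$, which is $\mathbb Q$ in the non-CM case. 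Your version is more elementary and self-contained --- it re-proves at the lattice level the structural input that the paper quotes without proof (that every elliptic curve in $E^g$ arises from a sequence of endomorphisms of $E$), and it meshes naturally with the framework of Proposition \ref{ellipticsublattice}; in fact it is the non-CM analogue of the step $1 \Rightarrow 2$ in Lemma \ref{lambdabarlambda}, where the paper deduces from $z\lambda = \mu$ that $z \in \mathbb Q(\tau) = {\rm End}_{\mathbb Q}(E)$. A further small gain is that by working directly with the saturated lattice $L \cap \Lambda$ you land on a primitive $v$ immediately and avoid the paper's $v = dv'$ reduction; what the paper's argument buys is brevity, at the cost of invoking standard theory of homomorphisms. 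Your alternative bivector sketch is also viable --- non-CM is indeed exactly the $\mathbb Q$-linear independence of $1, \tau, \tau^2$ --- but as written it glosses the final step: passing from decomposability to $(b_{ij}) = v\, v^{\mathsf T}$ requires combining symmetry with the rank-one conclusion and then using primitivity of the class to pin down the scalar, so that part would need to be written out before it could stand as a second proof.
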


\begin{proof}
Any elliptic curve in $E^g$ is isogenous to $E$, and can be obtained as the image
of some homomorphism $\phi : E \rightarrow E^g$, given by a sequence
of endomorphisms of $E$. If $E$ has no complex multiplication then 
$\phi$ arises from some $v$ in $\mathbb Z^g$. Write $v = d v'$ with $v'$ primitive.
Then the image $\phi(E)$ coincides with the ordinary elliptic curve $E_{v'}$.
\end{proof}

Consider on $E^g$ a product polarization, given by integers $m_1,\ldots,m_g$. 
Then the degree of an elliptic curve as above is
$$\deg(\phi(E)) = m_1 \deg(\phi_1) + \cdots + m_g \deg(\phi_g)$$
for the theta divisor on $E^g$ is 
$p_1^\ast(m_1 \Theta) + \cdots + p_g^\ast(m_g \Theta)$ where $\Theta$
is the theta divisor of the principal polarization on $E$, and one has
$\phi(E) \cdot p_i^\ast(\Theta) = \deg(\phi_i)$.

\begin{exe} \label{Eexord} \em
Assume that $E$ is the curve $\tau = i$. 
We list the extra-ordinary elliptic curves in $E^g$ of degree $\leq g$ 
with respect to the principal polarization (every $m_i = 1$), 
for $g=2$ and $g=3$.  

The group ${\rm Aut}(E)$ is identified with the subgroup $\{ \pm 1, \pm i \}$
of $\mathbb C^\times$ and the ring ${\rm End}(E)$ is identified with the ring of 
Gaussian integers $\mathbb Z[i]$ in $\mathbb C$ 
(cf. \cite{AG}, Ch. IV, Ex. 4.20.1). 
There are endomorphisms of degree 2,
namely $\pm(1 \pm i)$, which may be coupled with some automorphism
in the construction described above.

In $E^2$ one has 2 extra-ordinary curves with degree = 2, arising from the pairs
$(1,\pm i)$, the graphs of the automorphisms of $E$ given by $\pm i$.

In $E^3$ one has the following extra-ordinary elliptic curves with degree $\leq 3$:
6 elliptic curves of degree = 2 arising from $(1, \pm i, 0)$, $(1,0, \pm i)$, $(0,1, \pm i)$
(in each coordinate copy of $E^2$ the two extra-ordinary curves described above);
12 elliptic curves of degree = 3 arising from 
$(1, \pm 1, \pm i)$, $(1, \pm i, \pm 1)$, $(1, \pm i, \pm i)$; 
finally, using some endomorphism, one has
12 more elliptic curves of degree = 3 arising from 
$(1, \pm (1 \pm i), 0)$, $(1,0, \pm (1 \pm i))$, $(0,1, \pm (1 \pm i))$.
Summing up: 6 + 12 + 12 = 30 extra-ordinary elliptic curves.
\end{exe}

\subsection{In presence of complex multiplication} \label{CM}

Let $E$ be an elliptic curve, that we identify with 
$\mathbb C / (\mathbb Z + \tau \mathbb Z)$
for some $\tau$ in the upper half plane. 
Assume that $E$ has complex multiplication.
The necessary and sufficient condition is that
$\tau$ is algebraic of degree $2$ over $\mathbb Q$ 
(cf. e.g. \cite{AG}, Ch. IV, Thm. 4.19).
So, assume that $\tau$ satisfies the equation
\begin{equation} \label{tauquad}
\tau^2 + \frac{u}{w} \tau + \frac{v}{w} = 0
\end{equation}
with $u,v,w$ in $\mathbb Z$ such that $w > 0$ and $(u,v,w)=(1)$
and moreover $$u^2 - 4vw < 0$$ as $\tau$ is an imaginary complex number.

The self product $E^g$ is identified with $\mathbb C^g / \Lambda$ where
$\Lambda = \mathbb Z^g + \tau \mathbb Z^g$.
An element of $\Lambda$ is written as $\lambda = \lambda_1 + \tau \lambda_2$
with $\lambda_1, \lambda_2$ in $\mathbb Z^g$.
We exploit in the present setting the conditions of Proposition \ref{ellipticsublattice}.

It follows from (\ref{tauquad}) that multiplication by $w\tau$ in $\mathbb C$
sends the lattice $\mathbb Z + \tau \mathbb Z$ into itself and so defines an
endomorphism of $E$, in other words $w \tau$ may be viewed as an element
of ${\rm End}(E)$. It is well known, by the way, that
${\rm End}_{\mathbb Q}(E)$ is naturally identified with $\mathbb Q(\tau)$.
Similarly, for every $g>1$ multiplication by $w \tau$ in $\mathbb C^g$ induces
an endomorphism of $\Lambda = \mathbb Z^g + \tau \mathbb Z^g$, that we write as 
$\lambda \mapsto \bar\lambda$ where by definition
$$\bar\lambda := w \tau \, \lambda,$$
which defines an endomorphism of $E^g$.

Define $\Lambda_{\mathbb Q} := \Lambda \otimes \mathbb Q$ and
write an identification $\Lambda_{\mathbb Q} \wedge 
\Lambda_{\mathbb Q} = (\Lambda \wedge \Lambda) \otimes \mathbb Q$.

\begin{lem} \label{lambdabarlambda}
If $\omega$ is a primitive decomposable bivector in $\Lambda \wedge \Lambda$,
the following properties are equivalent:
\begin{enumerate}
\item[$-$]
$\omega$ maps to $0$ in $\mathbb C^g \wedge \mathbb C^g$;
\item[$-$]
$\omega = k (\lambda \wedge \bar \lambda)$ in 
$\Lambda_{\mathbb Q} \wedge \Lambda_{\mathbb Q}$
for some $\lambda \in \Lambda_{\mathbb Q}$ and $k \in \mathbb Q$;
\item[$-$]
$\omega = \pm (1/r) (\lambda \wedge \bar \lambda)$ in $\Lambda \wedge \Lambda$
where $\lambda \in \Lambda$ is primitive and
$r$ is the integral content of $\lambda \wedge \bar \lambda$.
\end{enumerate}
Firthermore, every bivector of the form $$(1/r) (\lambda \wedge \bar \lambda)$$
is primitive and decomposable in $\Lambda \wedge \Lambda$
and is canonically oriented.
\end{lem}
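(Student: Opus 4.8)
The plan is to prove the three equivalences by exhibiting the cycle $(1)\Rightarrow(2)\Rightarrow(3)\Rightarrow(1)$, and then to dispose of the ``Furthermore'' clause, which is really the technical heart and which also supplies the primitivity needed in the normalization step $(2)\Rightarrow(3)$.

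For $(1)\Rightarrow(2)$ I would argue as in the proof of Proposition \ref{ellipticsublattice}. Writing $\omega=\lambda\wedge\mu$, the hypothesis that $\omega$ vanishes in $\mathbb C^g\wedge\mathbb C^g$ says that $\lambda,\mu$ span a complex line $L$, while primitivity together with Corollary \ref{primitive} says that $\langle\lambda,\mu\rangle$ is maximal among rank two submodules, hence equals the saturated lattice $L\cap\Lambda$. The point where complex multiplication enters is that $\bar\lambda=w\tau\,\lambda$ is again a complex multiple of $\lambda$, so it lies in $L$, and it lies in $\Lambda$ by the very construction of $\lambda\mapsto\bar\lambda$; therefore $\bar\lambda\in L\cap\Lambda=\langle\lambda,\mu\rangle$ and we may write $\bar\lambda=a\lambda+b\mu$ with $a,b\in\mathbb Z$. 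Wedging with $\lambda$ gives $\lambda\wedge\bar\lambda=b\,\omega$, and $b\neq0$ since $b=0$ would force $w\tau=a\in\mathbb Z$, contradicting $\tau\notin\mathbb R$; thus $\omega=(1/b)(\lambda\wedge\bar\lambda)$, which is $(2)$. The implication $(2)\Rightarrow(1)$ is immediate, because $\lambda\wedge\bar\lambda=w\tau(\lambda\wedge\lambda)=0$ already in $\mathbb C^g\wedge\mathbb C^g$, so any rational multiple maps to $0$.

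For the normalization $(2)\Rightarrow(3)$ I would first replace the rational vector $\lambda\in\Lambda_{\mathbb Q}$ by the primitive lattice vector $\lambda_0\in\Lambda$ to which it is proportional, writing $\lambda=s\lambda_0$ with $s\in\mathbb Q$, so that (by linearity of $\lambda\mapsto\bar\lambda$) one has $\lambda\wedge\bar\lambda=s^2(\lambda_0\wedge\bar\lambda_0)$ and hence $\omega=ks^2(\lambda_0\wedge\bar\lambda_0)$. Granting the ``Furthermore'' clause, $(1/r)(\lambda_0\wedge\bar\lambda_0)$ is primitive, and since $\omega$ is primitive as well, two primitive bivectors that are rational multiples of one another must differ only by a sign; this forces $\omega=\pm(1/r)(\lambda_0\wedge\bar\lambda_0)$, which is $(3)$. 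The implication $(3)\Rightarrow(2)$ needs no comment.

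It remains to establish the ``Furthermore'' clause, which I regard as the main obstacle, since it is exactly the point where decomposability after dividing by the content is not a formality. Here I would invoke Lemma \ref{content}: with $\lambda$ primitive, the content $r$ of $\lambda\wedge\bar\lambda$ equals the content of the class $[\bar\lambda]$ in $\Lambda/\mathbb Z\lambda$, so I can write $\bar\lambda=a\lambda+r\mu'$ with $\mu'$ primitive mod $\lambda$. Then $(1/r)(\lambda\wedge\bar\lambda)=\lambda\wedge\mu'$ is visibly decomposable, and it is primitive by Corollary \ref{primitive} precisely because $\mu'$ is primitive mod $\lambda$. For the orientation I would identify $L\cong\mathbb C$ by $\lambda\mapsto1$, under which $\mu'\mapsto(w\tau-a)/r$ has imaginary part $(w/r)\,\mathrm{im}(\tau)>0$; hence the ordered basis $\lambda,\mu'$ is positively oriented and the bivector carries the canonical orientation of $L$, as claimed.
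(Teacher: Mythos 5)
Your proof is correct, and on the ``Furthermore'' clause it matches the paper: both divide $\bar\lambda$ by its content modulo $\lambda$ via Lemma \ref{content} to get the decomposition $\lambda\wedge\mu'$, and both settle the orientation by comparing with $\lambda,\tau\lambda$ (the paper by a chain of equally-oriented pairs, you by the explicit identification $\lambda\mapsto 1$; same substance). Where you genuinely diverge is in the two implications. For $(1)\Rightarrow(2)$ the paper works rationally: from $\mu=z\lambda$ it notes $z\in\mathbb Q(\tau)$, writes $z=x+\tau y$ with $x,y\in\mathbb Q$, and reads off $\omega=(y/w)(\lambda\wedge\bar\lambda)$ directly in $\Lambda_{\mathbb Q}\wedge\Lambda_{\mathbb Q}$ --- primitivity of $\omega$ is not even used there. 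You instead use primitivity plus Corollary \ref{primitive} to identify $\langle\lambda,\mu\rangle$ with the saturated lattice $L\cap\Lambda$ and then exploit integrality of $\bar\lambda$ to obtain $\bar\lambda=a\lambda+b\mu$ with $a,b\in\mathbb Z$; this costs an extra appeal to the saturation machinery but buys a sharper output ($k=1/b$ with $b$ a nonzero integer and $\lambda$ already a lattice vector). For $(2)\Rightarrow(3)$ the paper gives a self-contained coprimality computation: writing $k=q/r$ in lowest terms, $r\omega=q(\lambda\wedge\bar\lambda)$ with $\omega$ primitive forces $q=\pm1$ and $r=$ content. You instead deduce the normalization from the primitivity assertion of the ``Furthermore'' clause (two primitive bivectors that are rational multiples differ by sign) --- which is legitimate since you prove that clause independently, and indeed you make explicit a point the paper leaves implicit, namely that $\lambda\wedge\mu'$ is primitive because $\mu'$ is primitive mod $\lambda$. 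Two small glosses would tighten your write-up: in $(1)\Rightarrow(2)$, note that the decomposition $\omega=\lambda\wedge\mu$ may be chosen with $\lambda$ primitive (replace $\lambda=c\lambda_0$ by $\lambda_0$ and $\mu$ by $c\mu$), since Corollary \ref{primitive} and the maximality criterion are stated in that setting; and in $(2)\Rightarrow(3)$, note that $\lambda\neq 0$ (else $\omega=0$ is not primitive) so the primitive $\lambda_0$ proportional to it exists.
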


\begin{proof}
$1 \Rightarrow 2$. Write $\omega = \lambda \wedge \mu$. 
If $\omega$ maps to $0$ then there is $z \in \mathbb C$ such that $z \lambda = \mu$.
This requires that $z \in \mathbb Q(\tau)$ and, in the present setting,
we can write $z = x + \tau y$ with $x,y$ in $\mathbb Q$. Hence
$\mu = x \lambda + y (\tau\lambda) = x \lambda + (y/w) \bar\lambda$
in $\Lambda_{\mathbb Q}$, and $\omega = (y/w) (\lambda \wedge \bar\lambda)$.
$2 \Rightarrow 3$.  If $\omega = k (\lambda \wedge \bar \lambda)$,
we may assume that $\lambda$ belongs to $\Lambda$ and is primitive;
write $k = q/r$ with $q,r$ coprime integers and $r>0$;
then $r\, \omega = q (\lambda \wedge \bar\lambda)$
with $\omega$ primitive implies that
$q = \pm 1$, and that $r$ is the integral content of $\lambda \wedge \bar\lambda$.
That $3 \Rightarrow 1$ is clear.

Let now $\lambda$ be a primitive element of $\Lambda$, and let
$r$ be the content of $\lambda \wedge \bar\lambda$.
Then $r$ divides $[\bar\lambda]$ in $\Lambda / \mathbb Z \lambda$,
by Lemma \ref{content}, thus $[\bar\lambda] = r [\mu]$
for some $\mu$, and $(1/r) (\lambda \wedge \bar\lambda)
= \lambda \wedge \mu$ is decomposable in $\Lambda \wedge \Lambda$.
The pair $\lambda,\mu$ is equally oriented as the pair
$\lambda,\bar\lambda$ and the pair $\lambda,\tau\lambda$, which 
is equally oriented as the pair $\lambda,i\lambda$, since ${\rm im}(\tau) > 0$.
\end{proof}

\begin{prop}
If $E$ has complex multiplication, for every $g>1$ there are in $E^g$ 
infinitely many extra-ordinary elliptic curves.
\end{prop}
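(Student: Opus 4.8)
The plan is to produce an explicit infinite family of extra-ordinary elliptic curves realised as graphs of endomorphisms of $E$, using the description in \S\ref{endomorphisms}. Since $E$ has complex multiplication, $\mathrm{End}(E)$ contains elements outside $\mathbb Z$; concretely $w\tau \in \mathrm{End}(E)$ and $w\tau \notin \mathbb Z$, so the endomorphisms $\beta_n := n\,w\tau$ for $n=1,2,3,\dots$ form an infinite set, none of which lies in $\mathbb Z$ (as $\tau$ is irrational). For each $n$ I would take the homomorphism $\phi^{(n)} := (1,\beta_n,0,\dots,0)\colon E \to E^g$. Its first component is the identity, hence $\phi^{(n)}$ is an embedding (by \S\ref{endomorphisms} it suffices that one component be an automorphism), and its image $C_n := \phi^{(n)}(E)$ is an elliptic curve in $E^g$, namely the graph of $\beta_n$ inside the first coordinate copy of $E^2$.

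Next I would check that each $C_n$ is \emph{extra-ordinary}. By the definition in \S\ref{endomorphisms} this means that for every $\sigma \in \mathrm{Aut}(E)$ the twisted sequence $\phi^{(n)}\circ\sigma = (\sigma,\beta_n\sigma,0,\dots,0)$ fails to arise from a vector of $\mathbb Z^g$. If it did arise from $\mathbb Z^g$, then in particular $\sigma \in \mathbb Z \cap \mathrm{Aut}(E) = \{\pm 1\}$, and then $\beta_n\sigma = \pm\beta_n$ would have to be a rational integer, contradicting $\beta_n \notin \mathbb Z$. This uses only the elementary fact that the rational integers among the units of $\mathrm{End}(E)$ are $\pm 1$, so the argument is unaffected by the size of $\mathrm{Aut}(E)$ (which is larger precisely when $j=0$ or $j=1728$).

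Finally I would verify that the $C_n$ are pairwise distinct, which makes the family genuinely infinite. By the criterion recalled in \S\ref{endomorphisms}, an equality $C_n = C_m$ forces $\phi^{(n)} = \phi^{(m)}\circ\sigma$ for some $\sigma \in \mathrm{Aut}(E)$; comparing first components gives $\sigma = 1$, and then comparing second components gives $\beta_n = \beta_m$, i.e. $n=m$. Hence $n \mapsto C_n$ is injective, yielding infinitely many extra-ordinary elliptic curves and completing the proof.

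I do not expect a serious obstacle: the construction is explicit and the verifications elementary. The only point demanding care is the bookkeeping with $\mathrm{Aut}(E)$ in the last two steps, where one must ensure that automorphism twists can neither reduce a member of the family to an ordinary curve nor identify two distinct members; this is handled uniformly by the observation $\mathbb Z \cap \mathrm{Aut}(E) = \{\pm 1\}$ together with the normalization that pins the first component to the identity. The same family could also be packaged intrinsically via Lemma \ref{lambdabarlambda}: the primitive vectors $\lambda_n := e_1 + \tau\,n\,e_2 \in \Lambda$ give primitive decomposable bivectors $(1/r)(\lambda_n \wedge \bar\lambda_n)$ mapping to $0$, hence elliptic curves by Proposition \ref{ellipticsublattice}, and these are extra-ordinary because the complex line $\mathbb C\lambda_n$ contains no nonzero real vector.
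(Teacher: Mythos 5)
Your argument is correct, but it takes a genuinely different route from the paper's. The paper proves the proposition at the level of the period lattice: writing $\lambda = \lambda_1 + \tau\lambda_2$ with $\lambda_1,\lambda_2 \in \mathbb Z^g$, it computes $\bar\lambda = -v\lambda_2 + \tau(w\lambda_1 - u\lambda_2)$, observes that $\langle \lambda, \bar\lambda \rangle \cap \mathbb Z^g \neq 0$ holds precisely when $\lambda_1, \lambda_2$ are linearly dependent, and notes that the lattice of every ordinary curve does meet $\mathbb Z^g$ (it contains $v$); hence any primitive $\lambda$ with $\lambda_1,\lambda_2$ independent yields, via Lemma \ref{lambdabarlambda}, an extra-ordinary bivector, and there are infinitely many such choices. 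You instead argue at the level of endomorphisms, exhibiting the graphs $C_n$ of $\beta_n = n\,w\tau$ in the first coordinate copy of $E^2 \subset E^g$ and invoking the three criteria of \S\ref{endomorphisms}: embedding because the first component is an automorphism, extra-ordinariness because every twist $\phi^{(n)}\circ\sigma$ has first component $\sigma$, forcing $\sigma \in \mathbb Z\cap\mathrm{Aut}(E) = \{\pm 1\}$ and then $\pm\beta_n \notin \mathbb Z$, and pairwise distinctness by the same normalization of the first component; this bookkeeping is carried out correctly and uniformly in the size of $\mathrm{Aut}(E)$. What each approach buys: yours is constructive and concrete, producing an explicit one-parameter family of curves (whose degrees one could even read off, $\deg\beta_n = n^2vw$ plus the first-factor contribution), but it leans on the criterion stated without proof in \S\ref{endomorphisms} that two embeddings have the same image if and only if they differ by an automorphism twist; the paper's lattice-theoretic argument bypasses that criterion entirely, using only the invariant $\langle\lambda,\mu\rangle \cap \mathbb Z^g$, and it delivers slightly more, namely a characterization of exactly which special bivectors are extra-ordinary, which is the form of the statement that feeds into the coordinate computations of the later sections. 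Your closing remark, taking $\lambda_n = e_1 + \tau n e_2$ and noting that $\mathbb C\lambda_n$ contains no nonzero real (hence no nonzero integral) vector, is in fact precisely the paper's mechanism specialized to a one-parameter family, so the two proofs meet there.
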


\begin{proof}
If $\lambda \wedge \mu$ is of the special form above, it is easy to see that
$\langle \lambda, \mu \rangle \cap \mathbb Z^g \neq 0$ if and only if 
$\langle \lambda, \bar\lambda \rangle \cap \mathbb Z^g \neq 0$.
Remark that if $\lambda = \lambda_1 + \tau \lambda_2$ then
$\bar \lambda = -v \lambda_2 +  \tau (w \lambda_1 - u \lambda_2)$.
It is easy to see that $\langle \lambda, \bar\lambda \rangle \cap \mathbb Z^g \neq 0$
happens if and only if $\lambda_1, \lambda_2$ are linearly dependent.
Since $\langle \lambda, \mu \rangle \cap \mathbb Z^g \neq 0$ for an ordinary
bivector $v \wedge \tau v$, it is enough to take $\lambda_1, \lambda_2$ to be linearly
independent in order to have an extra-ordinary bivector $\lambda \wedge \mu$.
\end{proof}

If $\omega = (1/r) (\lambda \wedge \bar \lambda)$ in $\Lambda \wedge \Lambda$
is of the special form in Lemma \ref{lambdabarlambda} above,
we denote by $E_\omega$ the elliptic curve in $E^g$
whose associated bivector is $\omega$.

Assume now that a product polarization is given on $E^g$.
Write a decomposition $\omega = \lambda \wedge \mu$, as in the proof of the Lemma.
Then $(\lambda, \mu) = (1/r) (\lambda, \bar \lambda)$, 
where $r$ divides $(\lambda, \bar \lambda)$ because 
it divides  $\lambda \wedge \bar \lambda$. 
The basis $\lambda, \mu$ is positive, by the Lemma,
hence is not symplectic (see \S \ref{homology}). Therefore (see \S \ref{degpol}) 
the degree of the corresponding elliptic curve is given by $- (\lambda,\mu)$, and
\begin{equation} \label{degree}
\deg (E_\omega) = -(1/r) (\lambda, \bar \lambda).
\end{equation}

\section{Diophantine equations for the collection 
of elliptic curves in the self product}

Let $E$ be an elliptic curve with complex multiplication.
Still we refer to the setting of \S \ref{CM}.
We characterize the bivectors of the special form occurring in 
Lemma \ref{lambdabarlambda} by means of coordinates and
explicit equations in the coordinates, for small values of the dimension $g$.
Given a primitive bivector $\omega$ in $\Lambda \wedge \Lambda$,
we have to understand when there are 
$\lambda \in \Lambda_{\mathbb Q}$ and $k \in \mathbb Q$ such that
$k\, \lambda \wedge \bar\lambda = \omega$
holds in $\Lambda_{\mathbb Q} \wedge \Lambda_{\mathbb Q}$. 

\subsection{In dimension $g = 2$} \label{g=2}

In $\mathbb C^2$ is the period lattice $\Lambda \cong \mathbb Z^4$
of the Abelian surface $E^2$. 
If $\lambda = (a + \tau c,\, b + \tau d)$
is the element of $\Lambda $ of coordinates $(a,b,c,d)$,
then $\bar \lambda $ has coordinates $(-vc, -vd, wa-uc, wb-ud)$.

The bivector $\lambda \wedge \bar \lambda$
in $\Lambda \wedge \Lambda \cong \mathbb Z^6$  
is given by the following 6 coordinates: the entries of the matrix
\begin{equation*}
\left(\begin{matrix}
wa^2-uac+vc^2 & wab-uad+vcd \\ wab-ubc+vcd & wb^2-ubd+vd^2
\end{matrix}\right) \ = \
\left(\begin{matrix} a & c \\ b & d \end{matrix}\right)
\left(\begin{matrix} w & -u \\ 0 & v \end{matrix}\right)
\left(\begin{matrix} a & b \\ c & d \end{matrix}\right)
\end{equation*}
and the entries of the vector  $(v, w)\, (ad-bc).$
Note that the difference of entries in the second diagonal is $u(ad-bc)$.
As $u,v,w$ are coprime, there are only 4 essential coordinates, namely
the upper triangular entries of the matrix above and the scalar $$ad-bc,$$
and the content $r$ of $\lambda \wedge \bar \lambda$ is the greatest common divisor
of the four essential coordinates.

Because of Lemma \ref{lambdabarlambda}, the bivectors of the form
$\pm (1/r)(\lambda \wedge \bar \lambda)$ are in one to one correspondence
with the primitive vectors $(\alpha , \beta, \gamma, \eta)$ in $\mathbb Z^4$
such that there are $(a,b,c,d) \in \mathbb Q^4$ and $k \in \mathbb Q$ with
\begin{equation} \label{ADA2}
k \left(\begin{matrix} a & c \\ b & d \end{matrix}\right)
\left(\begin{matrix} w & -u \\ 0 & v \end{matrix}\right)
\left(\begin{matrix} a & b \\ c & d \end{matrix}\right) =
\left(\begin{matrix} \alpha  & \gamma \\ 
\gamma + u\eta  & \beta \end{matrix}\right),
\end{equation} 
\begin{equation} \label{det2}
k\, (ad-bc) = \eta,
\end{equation}
so the bivectors $(1/r)(\lambda \wedge \bar \lambda)$ are characterized by
the condition that $\alpha , \beta \geq 0$.

Taking the determinant in (\ref{ADA2}) and using (\ref{det2}), it follows that
\begin{equation} \label{alfabeta}
\alpha \beta - \gamma (\gamma + u \eta) = vw \eta ^2.
\end{equation}

\begin{thm} \label{g=2}
The bivectors of the form $(1/r)(\lambda \wedge \bar \lambda)$ 
in $\Lambda \wedge \Lambda$ are in one to one correspondence
with the primitive vectors $(\alpha, \beta, \gamma, \eta)$ in $\mathbb Z^4$
with $\alpha,\beta \geq 0$ satisfying condition $(\ref{alfabeta})$.
\end{thm}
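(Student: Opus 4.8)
The plan is to reduce the theorem, by means of the correspondence already established through Lemma \ref{lambdabarlambda}, to the following single equivalence: for a primitive vector $(\alpha,\beta,\gamma,\eta)\in\mathbb Z^4$, the system (\ref{ADA2})--(\ref{det2}) admits a solution $(a,b,c,d)\in\mathbb Q^4$, $k\in\mathbb Q$ if and only if (\ref{alfabeta}) holds. One implication is the determinant computation already recorded above; the real point is that both implications fall out at once once (\ref{alfabeta}) is recognised as a Plücker relation.

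To this end I would attach to $(\alpha,\beta,\gamma,\eta)$ the bivector $\omega\in\Lambda\wedge\Lambda$ whose six coordinates, in the identification $\Lambda\wedge\Lambda\cong\mathbb Z^6$ used above, are the matrix $\left(\begin{smallmatrix}\alpha & \gamma\\ \gamma+u\eta & \beta\end{smallmatrix}\right)$ together with the vector $(v,w)\eta$. These are integers, and an elementary argument with greatest common divisors, using $(u,v,w)=(1)$, shows that $\omega$ is primitive in $\Lambda\wedge\Lambda$ exactly when $(\alpha,\beta,\gamma,\eta)$ is primitive in $\mathbb Z^4$. Labelling the six coordinates as the Plücker coordinates $p_{ij}$ of $\wedge^2$ of a rank-$4$ module, so that $p_{13}=\alpha$, $p_{24}=\beta$, $p_{14}=\gamma$, $p_{23}=\gamma+u\eta$ and $p_{12}p_{34}=vw\eta^2$, the decomposability relation $p_{12}p_{34}-p_{13}p_{24}+p_{14}p_{23}=0$ becomes $vw\eta^2-\alpha\beta+\gamma(\gamma+u\eta)=0$, which is exactly (\ref{alfabeta}). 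Hence (\ref{alfabeta}) is equivalent to the decomposability of $\omega$.

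Independently of (\ref{alfabeta}), I would check that every $\omega$ of this coordinate shape maps to $0$ in $\mathbb C^2\wedge\mathbb C^2$: writing the image as an element of $\mathbb Q+\mathbb Q\tau$ and reducing with $\tau^2=-(u/w)\tau-(v/w)$ from (\ref{tauquad}), both rational components cancel identically. (The shape was in fact read off from $\lambda\wedge\bar\lambda$ precisely so as to fill the kernel of this map.) Assuming now (\ref{alfabeta}), the bivector $\omega$ is simultaneously primitive, decomposable and zero in $\mathbb C^2\wedge\mathbb C^2$, so Lemma \ref{lambdabarlambda} applies and gives $\omega=\pm(1/r)(\lambda\wedge\bar\lambda)$; spelling out the relation $\omega=k(\lambda\wedge\bar\lambda)$ coordinate by coordinate returns a solution of (\ref{ADA2})--(\ref{det2}), which settles the nontrivial implication. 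The sign is fixed by the requirement $\alpha,\beta\geq0$, which, as noted in the setup, singles out the canonically oriented $(1/r)(\lambda\wedge\bar\lambda)$ because $\alpha$ and $\beta$ are $1/r$ times values of the positive-definite form $wx^2-uxy+vy^2$.

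The step needing the most care is the bookkeeping of signs and of coordinate conventions: aligning the entries of $\lambda\wedge\bar\lambda$ with the Plücker labels so that the relation comes out as (\ref{alfabeta}) with the correct signs, and confirming that $\alpha,\beta\geq0$ separates $\omega$ from $-\omega$ in every case, including when one of $\alpha,\beta$ vanishes (which cannot happen for both simultaneously, as $\lambda\neq0$). The remaining verifications---primitivity, the vanishing in $\mathbb C^2\wedge\mathbb C^2$, and the Plücker identity---are short and mechanical once the coordinates are fixed.
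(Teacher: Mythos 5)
Your proposal is correct, but it takes a genuinely different route from the paper's own proof. The paper argues computationally: it converts the quadratic system (\ref{ADA2})--(\ref{det2}) into the explicit $4\times 4$ linear system (\ref{linearb}), shows (with a case split $\eta\neq 0$ versus $\eta=0$, the latter requiring a rank-one factorization) that the solutions of the quadratic system are exactly the nonzero solutions of the linear one, and then verifies that the determinant of the coefficient matrix equals $-(\alpha\beta-\gamma(\gamma+u\eta)-vw\eta^2)^2$, so that (\ref{alfabeta}) forces a nontrivial kernel. You instead build the candidate bivector $\omega$ directly from $(\alpha,\beta,\gamma,\eta)$, observe that this coordinate shape lies in the kernel of $\Lambda\wedge\Lambda\rightarrow\mathbb C^2\wedge\mathbb C^2$ identically (the image is $-\eta(v+u\tau+w\tau^2)$, which vanishes by (\ref{tauquad})), identify (\ref{alfabeta}) as the Pl\"ucker relation $p_{12}p_{34}-p_{13}p_{24}+p_{14}p_{23}=0$, i.e.\ as decomposability of $\omega$, and then let Lemma \ref{lambdabarlambda} do the work; your sign analysis via positive definiteness of $wx^2-uxy+vy^2$ is also sound, including the observation that $\alpha=\beta=0$ would force $\lambda=0$. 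This route is more conceptual: it explains \emph{why} the paper's determinant is (minus) the square of the Pl\"ucker expression, and it eliminates both the case analysis and the explicit determinant computation. Two points deserve to be made explicit. First, the Pl\"ucker relation gives decomposability a priori only over $\mathbb Q$, whereas Lemma \ref{lambdabarlambda} asks for a primitive decomposable bivector in $\Lambda\wedge\Lambda$; for a primitive $\omega$ integral decomposability follows by passing to the saturation of the corresponding rank-two sublattice and using Corollary \ref{primitive}, a short argument but one you should record rather than leave under ``mechanical.'' Second, what the paper's computational route buys that yours does not: it produces the linear system (\ref{linearb}) and Remark \ref{rank=2} (rank equal to $2$, independence of the row pairs when $\alpha\beta\neq 0$), which are reused as essential ingredients in the proof of Theorem \ref{g=3}; if one adopted your argument, that machinery would have to be established separately before the three-dimensional case could proceed.
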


\begin{proof}
Let $(\alpha, \beta, \gamma, \eta)$ be a primitive vector,
and assume that condition $(\ref{alfabeta})$ holds. We show that equations 
(\ref{ADA2}) and (\ref{det2}) admit some solution.

Multiplying (\ref{ADA2}) by the adjoint matrix
and using (\ref{det2}), it follows that
\begin{equation} \label{linear} 
\eta \left(\begin{matrix} a & c \\ b & d \end{matrix}\right)
\left(\begin{matrix} w & -u \\ 0 & v \end{matrix}\right) =
\left(\begin{matrix} \alpha  & \gamma \\ 
\gamma + u\eta & \beta \end{matrix}\right)
\left(\begin{matrix} d & -b \\ -c & a \end{matrix}\right).
\end{equation}
From this in turn, by multiplication again, it follows that
$$\eta \left(\begin{matrix} a & c \\ b & d \end{matrix}\right)
\left(\begin{matrix} w & -u \\ 0 & v \end{matrix}\right)
\left(\begin{matrix} a & b \\ c & d \end{matrix}\right) =
\left(\begin{matrix} \alpha  & \gamma  \\ 
\gamma + u \eta  & \beta  \end{matrix}\right)  (ad-bc).$$

We claim that the vectors $(a,b,c,d)$ which solve equations 
(\ref{ADA2}) and (\ref{det2}) with some $k$ 
are the nonzero solutions of equation (\ref{linear}).
If $\eta \neq 0$, in the last equation above for every nonzero solution one has 
$ad-bc \neq 0$, so that (\ref{ADA2}) and (\ref{det2}) are satisfied with 
$k = \eta/(ad-bc)$. 
If $\eta = 0$, in equation $(\ref{linear})$ 
for every nonzero solution one has $ad-bc=0$; writing
$\left(\begin{matrix} a & b \\ c & d \end{matrix}\right) = 
\left(\begin{matrix} p \\ q \end{matrix}\right)
\left(\begin{matrix} r & s \end{matrix}\right)$  it follows that
$\left(\begin{matrix} \alpha & \gamma \\ \gamma & \beta \end{matrix}\right) =
t \left(\begin{matrix} r \\ s \end{matrix}\right)
\left(\begin{matrix} r & s \end{matrix}\right)$;
it is then easy to check that (\ref{ADA2}) and (\ref{det2}) are satisfied
with $k = t/(wp^2-upq+vq^2)$. 

Therefore equations $(\ref{ADA2})$ and $(\ref{det2})$ 
have a solution if and only if equation $(\ref{linear})$ has a nonzero solution.
Write it in the form
\begin{equation*} \tag{\ref{linear}$'$} \label{linearb}
\left(\!\begin{matrix}
w \eta  & 0 & \gamma  & -\alpha  \\
-\gamma - u \eta & \alpha  & v \eta  & 0 \\
0 & w \eta  & \beta  & -\gamma - u \eta \\
-\beta  & \gamma & 0 & v \eta
\end{matrix}\;\right)
\left(\begin{matrix} a \\ b \\ c \\ d \end{matrix}\right) = 
\left(\begin{matrix} 0 \\ 0 \\ 0 \\ 0 \end{matrix}\right).
\end{equation*}
It is not difficult to check that the determinant of the matrix above is equal to 
$-(\alpha\beta - \gamma(\gamma + u\eta) - vw \eta^2)^2$, that under condition
(\ref{alfabeta}) reduces to $0$.
\end{proof}

\begin{remar} \label{rank=2} \em
We have seen in the proof above that, under condition (\ref{alfabeta}), 
the vectors which solve equations (\ref{ADA2}) and (\ref{det2})
with some $k$ are the nonzero solutions of equation (\ref{linearb}).
It is not difficult to check that the coefficient matrix above has rank $=2$ 
and moreover, if $\alpha \beta \neq 0$, the first two rows and the last two rows 
are independent.
\end{remar}

Assume that on $E^2$ is given a product polarization, corresponding to the pair 
of positive integers $(m,n)$. Then one has 
$- (\lambda, \bar\lambda) = m(wa^2 - uac+ vc^2) + n(wb^2 - ubd + vd^2)$, 
hence formula (\ref{degree}) for the degree of the corresponding elliptic curve 
may be written as
\begin{equation}
\deg (E_\omega) = m\alpha  + n\beta
\end{equation}
expressed in terms of the corresponding coordinates in $\mathbb Z^4$.

\begin{remar} \em
It is not difficult to show, working with the explicit equations in Theorem \ref{g=2}, 
that the maximum value of the quantity $N_{E^2}(2)$ is $8$, and is attained
only with respect to the principal polarization ($m=n=1$).
\end{remar}

\subsection{In dimension $g = 3$} \label{g=3}

In $\mathbb C^3$ one has the period lattice $\Lambda \cong \mathbb Z^6$
of the Abelian threefold $E^3$. If 
$\lambda  = (a + \tau d, b + \tau e, c + \tau f)$
is the element of $\Lambda$ of coordinates $(a,b,c,d,e,f)$
then $\bar \lambda = (-vd, -ve, -vf, wa-ud, wb-ue, wc-uf)$.

The bivector $\lambda \wedge \bar \lambda$
in $\Lambda \wedge \Lambda \cong \mathbb Z^{15}$ 
is given by the following 15 coordinates: the entries of the matrix
\begin{equation*}
\left(\begin{matrix} a & d \\ b & e \\ c & f \end{matrix}\right)
\left(\begin{matrix} w & -u \\ 0 & v \end{matrix}\right)
\left(\begin{matrix} a & b & c \\ d & e & f \end{matrix}\right)
\end{equation*}
and those in the matrix
$\left(\begin{matrix} v \\ w \end{matrix}\right) 
\Big(\begin{matrix} ae-bd & af-cd & bf-ce \end{matrix}\Big).$
Note that the differences of two entries in symmetric positions 
in the $3 \times 3$  matrix are the entries of 
$u\, \Big(\begin{matrix} ae-bd & af-cd & bf-ce \end{matrix}\Big)$.
As $u,v,w$ are coprime, there are only 9 essential coordinates, the upper
triangular entries of the matrix above and the entries of the vector
\begin{equation*}
\Big(\begin{matrix} ae-bd & af-cd & bf-ce \end{matrix}\Big),
\end{equation*}
and the content $r$
of $\lambda \wedge \bar \lambda$ is the greatest common divisor
of the nine essential coordinates.

Because of Lemma \ref{lambdabarlambda}, the bivectors of the form
$\pm (1/r)(\lambda \wedge \bar \lambda)$ are in one to one correspondence
with the primitive vectors
$(\alpha, \beta, \gamma, \delta, \epsilon, \zeta, \eta, \theta, \iota)$
in $\mathbb Z^9$ such that there are $(a,b,c,d,e,f) \in \mathbb Q^6$ 
and $k \in \mathbb Q$ with
\begin{equation} \label{ADA3}
k \left(\begin{matrix} a & d \\ b & e \\ c & f \end{matrix}\right)
\left(\begin{matrix} w & -u \\ 0 & v \end{matrix}\right)
\left(\begin{matrix} a & b & c \\ d & e & f \end{matrix}\right) =
\left(\begin{matrix} 
\alpha  & \delta  & \epsilon  \\ \delta + u \eta  & \beta  & \zeta  \\ 
\epsilon + u \theta  & \zeta + u \iota  & \gamma  
\end{matrix}\right),
\end{equation} 
\begin{equation} \label{det3}
k\; \big(ae-bd,\, af-cd,\, bf-ce \big) 
= \big(\eta , \theta , \iota \big),
\end{equation}
and the bivectors $(1/r)(\lambda \wedge \bar \lambda)$
are characterized  requiring that $\alpha, \beta, \gamma \geq 0$.

The system of quadratic equations (\ref{ADA3}) and (\ref{det3})
can be split into three systems like (\ref{ADA2}) and (\ref{det2}),
corresponding to the principal minors 
$$\left(\begin{matrix} 
\alpha & \delta \\ \delta + u \eta & \beta
\end{matrix}\right)  \hspace{20pt}
\left(\begin{matrix} 
\beta & \zeta \\ \zeta + u \iota & \gamma  
\end{matrix}\right)  \hspace{20pt}
\left(\begin{matrix} 
\gamma  & \epsilon + u \theta \\ \epsilon & \alpha 
\end{matrix}\right).$$
It follows from \S \ref{g=2} that
we have the following necessary conditions
\begin{equation} \label{alfabetagamma} 
\alpha  \beta  - \delta (\delta + u \eta) = vw \eta ^2  \hspace{10pt}
\beta \gamma  - \zeta (\zeta + u \iota) = vw \iota ^2  \hspace{10pt}
\alpha \gamma  - \epsilon (\epsilon + u \theta) = vw \theta ^2.
\end{equation}

As in the preceding section, we will replace the quadratic equations
(\ref{ADA3}) and (\ref{det3}) with a system of linear equations, 
but now another nontrivial 
compatibility condition is necessary, given by the equation
\begin{equation} \label{cubic} \begin{split}
2\alpha \beta \gamma \  = & \ \
\delta (\epsilon + u \theta) \zeta  + 
(\delta + u \eta) \epsilon (\zeta + u \iota) + \\
& \ \ vw \big((2\delta + u \eta) \theta \iota -
(2\epsilon + u \theta) \eta \iota +
(2\zeta + u \iota) \eta \theta \big). \end{split}
\end{equation}

\begin{thm} \label{g=3}
The bivectors of the form $(1/r)(\lambda \wedge \bar \lambda)$ 
in $\Lambda \wedge \Lambda$ correspond bijectively to the primitive vectors 
$(\alpha, \beta, \gamma, \delta, \epsilon, \zeta,\eta, \theta, \iota)$
in $\mathbb Z^9$ with $\alpha, \beta, \gamma \geq 0$
satisfying conditions $(\ref{alfabetagamma})$ and $(\ref{cubic})$.
\end{thm}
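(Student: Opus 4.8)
The correspondence in question is, by Lemma \ref{lambdabarlambda} and the discussion preceding the statement, nothing but the correspondence between admissible bivectors and solutions $(a,b,c,d,e,f)\in\mathbb Q^6$, $k\in\mathbb Q$ of the quadratic system (\ref{ADA3})--(\ref{det3}); the map from bivectors to vectors is literally ``read off the nine essential coordinates'', so it is manifestly injective and only the characterization of its image is at stake, the sign requirement $\alpha,\beta,\gamma\geq 0$ singling out the canonically oriented representatives. The necessity of the three relations (\ref{alfabetagamma}) and of the cubic (\ref{cubic}) has already been recorded above, so the real content is the converse: starting from a primitive vector with $\alpha,\beta,\gamma\geq 0$ satisfying (\ref{alfabetagamma}) and (\ref{cubic}), I must produce a solution of (\ref{ADA3})--(\ref{det3}). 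The plan is to imitate Theorem \ref{g=2}, replacing the quadratic system by a linear one whose solvability is governed precisely by (\ref{alfabetagamma}) and (\ref{cubic}).

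The first step is to split the system, as indicated before the statement, into the three $2\times 2$ subsystems attached to the principal minors. Each of these has exactly the shape of (\ref{ADA2})--(\ref{det2}): the $\{1,2\}$-minor involves the unknowns $(a,b,d,e)$ and the data $(\alpha,\beta,\delta,\eta)$, the $\{2,3\}$-minor involves $(b,c,e,f)$ and $(\beta,\gamma,\zeta,\iota)$, and the $\{1,3\}$-minor involves $(a,c,d,f)$ and $(\gamma,\alpha,\epsilon,\theta)$. Applying to each the adjugate manipulation of Theorem \ref{g=2} turns it into a homogeneous linear system of type (\ref{linearb}); by Remark \ref{rank=2}, the relation in (\ref{alfabetagamma}) forces the coefficient matrix of each block to have rank $2$. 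Assembling the three blocks produces one homogeneous linear system in the six unknowns $(a,b,c,d,e,f)$, and since the three minors together exhaust every entry of (\ref{ADA3})--(\ref{det3}), a common nonzero solution of the three linearized blocks with a consistent value of $k$ is equivalent to a solution of the full quadratic system. The compatibility is most transparently phrased through the observation that, on a genuine solution, the right-hand matrix $N$ of (\ref{ADA3}) annihilates the vector $(\iota,-\theta,\eta)$, because the rows of the $2\times 3$ matrix $\left(\begin{smallmatrix} a&b&c\\ d&e&f\end{smallmatrix}\right)$ are orthogonal to their cross product; expanding $N(\iota,-\theta,\eta)^{\mathsf T}=0$ together with $\det N=0$ and reducing modulo (\ref{alfabetagamma}) is exactly what yields (\ref{cubic}).

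The heart of the argument is then the converse: under (\ref{alfabetagamma}) and (\ref{cubic}) the assembled linear system has a nonzero solution, and that solution lifts to (\ref{ADA3})--(\ref{det3}). Concretely I would compute the rank of the combined coefficient matrix and verify that (\ref{cubic}) forces it to drop to $4$, leaving a two-dimensional solution space, which is the expected dimension since the fibre of $\lambda\mapsto\lambda\wedge\bar\lambda$ over a fixed bivector is the $\mathbb Q(\tau)$-line through $\lambda$. Given such a nonzero $(a,b,c,d,e,f)$, I would recover $k$ exactly as in Theorem \ref{g=2}: if one of $\eta,\theta,\iota$ is nonzero, the corresponding minor is nonzero and (\ref{det3}) fixes $k$, while if all three vanish the matrix $\left(\begin{smallmatrix} a&b&c\\ d&e&f\end{smallmatrix}\right)$ has rank $1$ and factors as an outer product, from which $k$ is read off a single diagonal entry; one then checks that the three blocks return the same $k$, so that (\ref{ADA3}) holds.

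The main obstacle is precisely this rank-and-compatibility analysis of the combined system: unlike the single $4\times 4$ determinant of the $g=2$ case, one must now control a larger, redundant linear system and show that (\ref{cubic}) is exactly the condition making its rank drop to the expected value, while simultaneously forcing the three separately determined values of $k$ to agree. The bookkeeping is substantially heavier and, as already in Theorem \ref{g=2}, must be organised into cases according to which of $\alpha,\beta,\gamma$ and which of the minors $\eta,\theta,\iota$ vanish; the most degenerate situations (two of the diagonal entries zero, or all three minors zero) require separate but elementary treatment.
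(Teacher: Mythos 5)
Your proposal is correct and, for the substantive converse direction, takes essentially the paper's route: split (\ref{ADA3})--(\ref{det3}) into the three principal-minor subsystems of type (\ref{ADA2})--(\ref{det2}), linearize each by the adjugate trick of Theorem \ref{g=2} together with Remark \ref{rank=2}, assemble the blocks into one homogeneous linear system in $(a,b,c,d,e,f)$ (the paper keeps only the first two rows of each block, legitimate when $\alpha\beta\gamma\neq 0$, obtaining the square system (\ref{abcdef})), identify (\ref{cubic}) with the solvability of that system, and lift a nonzero solution back to the quadratic system, with separate treatment of the cases $\alpha=0$, $\beta=0$, $\gamma=0$. Where you genuinely differ is the necessity of (\ref{cubic}) --- which, contrary to your opening sentence, is \emph{not} established before the theorem (only (\ref{alfabetagamma}) is) --- and your alternative works and is cleaner: with $N$ the right-hand matrix of (\ref{ADA3}) one has $\det N=0$, since $N$ is $k$ times a product through a $2\times 3$ matrix, and $N(\iota,-\theta,\eta)^{\mathsf T}=0$, since the kernel of $\left(\begin{smallmatrix} a&b&c\\ d&e&f\end{smallmatrix}\right)$ is spanned by the cross product of its rows, proportional to $(\iota,-\theta,\eta)$ by (\ref{det3}); expanding $\det N=0$ modulo (\ref{alfabetagamma}) gives $2\alpha\beta\gamma=\delta\zeta(\epsilon+u\theta)+\epsilon(\delta+u\eta)(\zeta+u\iota)+vw(\alpha\iota^{2}+\beta\theta^{2}+\gamma\eta^{2})$, and the three scalar relations $\alpha\iota=\delta\theta-\epsilon\eta$, $\beta\theta=(\delta+u\eta)\iota+\zeta\eta$, $\gamma\eta=\zeta\theta-\epsilon\iota$ convert the last bracket into the trilinear one of (\ref{cubic}); moreover this derivation needs no case split on $\alpha\beta\gamma$, unlike the paper's, which passes through the determinant of (\ref{abcdef}) after disposing of the degenerate cases by hand. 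What the alternative does not buy is the converse, where the paper's identity --- the determinant of (\ref{abcdef}) equals $\alpha\beta\gamma$ times the cubic expression, modulo (\ref{alfabetagamma}) --- remains indispensable; note that your plan to prove the combined $12\times 6$ matrix has rank exactly $4$ is stronger than needed (vanishing of the single $6\times 6$ determinant already yields a nonzero solution) and correspondingly heavier, and motivating the expected rank by the $\mathbb Q(\tau)$-line fibre would be circular if used for existence. Finally, the two steps you leave as checks are exactly where the paper supplies the mechanism: a nonzero global solution cannot restrict to zero on the four coordinates of a block, because the equations of (\ref{abcdef}) would then force one of $\alpha,\beta,\gamma$ to vanish; and the three block factors $k$ agree not through (\ref{det3}) (whose relevant component may vanish) but because two distinct blocks share one diagonal scalar equation, e.g.\ $k(wa^{2}-uad+vd^{2})=\alpha$, which determines $k$ uniquely since $\alpha\neq 0$. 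With these details filled in, your argument is complete and matches the paper's proof in all essentials.
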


\begin{proof}
Given a primitive vector in $\mathbb Z^9$, assume that equations 
(\ref{ADA3}) and (\ref{det3}) have a solution. We know that
(\ref{alfabetagamma}) is satisfied, and we will show that (\ref{cubic}) is too.
If $\alpha  =0$ then clearly $a=d=0$, hence
$\delta = \epsilon = 0$ and $\eta  = \theta  = 0$, and
(\ref{cubic}) is satisfied. The same if $\beta  =0$ or $\gamma  =0$. 
So assume that $\alpha  \beta \gamma  \neq 0$.

The system of quadratic equations (\ref{ADA3}) and (\ref{det3})
can be split into three systems like (\ref{ADA2}) and (\ref{det2}),
corresponding to the principal minors in the right hand side of (\ref{ADA3}).
According to Remark \ref{rank=2}, such a quadratic system 
can be replaced with a linear equation like (\ref{linearb}),
whose nonzero solutions are the solutions of the quadratic system,
and since $\alpha\beta\gamma \neq 0$ we know moreover that 
for each matrix equation like (\ref{linearb}) we only need to consider
the first two scalar components. So we end with the linear equation
\begin{equation} \label{abcdef}
\left(\begin{matrix}
w \eta & 0 & 0 & \delta & -\alpha & 0 \\
- \delta - u \eta & \alpha  & 0 & v \eta & 0 & 0 \\
0 & w \iota & 0 & 0 & \zeta & -\beta \\
0 & - \zeta - u \iota  & \beta & 0 & v \iota & 0 \\
-\gamma & 0 & \epsilon & 0 & 0 & v \theta \\
0 & 0 & w \theta & \gamma & 0 & - \epsilon - u \theta \\
\end{matrix}\right) 
\left(\begin{matrix} a \\ b \\ c \\ d \\ e \\ f \end{matrix}\right) =
\left(\begin{matrix} 0 \\ 0 \\ 0 \\ 0 \\ 0 \\ 0 \end{matrix}\right)
\end{equation}
having a nonzero solution. The determinant of the coefficient matrix must be zero.
It is easy to see that, under condition (\ref{alfabetagamma}),
the determinant of the matrix above is given by $\alpha \beta \gamma$
multiplying the expression 
\begin{equation*} \begin{split}
2\alpha \beta \gamma \ \ - & \ \
\delta (\epsilon + u \theta) \zeta  - 
(\delta + u \eta) \epsilon (\zeta + u \iota) \\
- & \ \ vw \big((2\delta + u \eta) \theta \iota -
(2\epsilon + u \theta) \eta \iota +
(2\zeta + u \iota) \eta \theta \big). \end{split}
\end{equation*}
Thus condition (\ref{cubic}) is satisfied.

Conversely, given a primitive vector in $\mathbb Z^9$, assume that 
conditions (\ref{alfabetagamma}) and (\ref{cubic}) are satisfied.
We will show that equations (\ref{ADA3}) and (\ref{det3}) have a solution. 
If $\alpha = 0$ then clearly $\delta = \eta = 0$ and $\epsilon = \theta = 0$
follow from (\ref{alfabetagamma}) and in (\ref{ADA3}) we must have $a=d=0$.
So (\ref{ADA3}) and (\ref{det3}) reduce to equations like
(\ref{ADA2}) and (\ref{det2}) involving $(b,c,e,f)$, whose compatibility condition
is an equation like (\ref{alfabeta}), according to Theorem \ref{g=2},
and is satisfied because of (\ref{alfabetagamma}). Hence there is a 
solution of equations (\ref{ADA3}) and (\ref{det3}). The same if
$\beta = 0$ or $\gamma = 0$. So assume that $\alpha \beta \gamma \neq 0$.

Let us show that every nonzero solution $(a,b,c,d,e,f)$ of equation (\ref{abcdef}) above
gives a solution of equations (\ref{ADA3}) and (\ref{det3}). View it as
a $3 \times 2$ coordinate matrix. All its $2 \times 2$ minors are nonzero. 
If for instance $(a,c,d,f)$ were zero with $(b,e)$ nonzero,
equation (\ref{abcdef}) would give $\alpha = 0$, contrary to the hypothesis.
Each of these minors satisfies an
equation like (\ref{linearb}) and therefore satisfies equations like (\ref{ADA2})
and (\ref{det2}) for some proportionality factor, according to
Remark \ref{rank=2}. Since $\alpha\beta\gamma \neq 0$ the three proportionality
factors are equal. Because two distinct equations like (\ref{ADA2}) 
have in common one scalar equation which determines the factor uniquely.
Therefore we have a solution of equations (\ref{ADA3}) and (\ref{det3}).
\end{proof}

If on $E^3$ a product polarization is given, corresponding to the sequence 
of positive integers $(m,n,p)$, then one has 
$- (\lambda, \bar\lambda) =
m(wa^2 - uad + vd^2) + n(wb^2 - ube + ve^2) + p(wc^2 - ucf + vf^2)$, 
hence by formula (\ref{degree}) the degree of the elliptic curve is 
\begin{equation}
\deg (E_\omega) = m\alpha  + n\beta + p\gamma
\end{equation}
expressed in terms of the corresponding coordinates in $\mathbb Z^9$.

\begin{remar} \em
It is not difficult to show, working with the equations in Theorem \ref{g=3}, 
that the maximum value of the quantity $N_{E^3}(3)$ is $43$, and clearly is attained
only with respect to the principal polarization ($m=n=p=1$). 
In Example \ref{Eexord} the maximum is attained,
and all elliptic curves are obtained with a natural construction.
\end{remar}

\section{On the number of solutions}

\subsection{Some results from Number Theory} \label{numbertheory}

We collect some known results concerning the following problem: 
given a compact convex subset $K$ in $\mathbb R^g$, estimate the number 
$N := {\rm card}\, (\mathbb Z^g \cap K)$ of integer vectors (or lattice points) 
belonging to the convex set. This number is naturally approximated by 
the $g$-dimensional volume $V:= {\rm vol}(K)$, and then the question is 
to estimate the (error or) discrepancy $N-V$.
Let us confine ourselves to small dimensions, and to upper estimates. One has:
\begin{itemize}
\item[$(a)$]
for a compact convex region in $\mathbb R^2$ of area $A$
whose boundary is a Jordan curve of length $L$ then 
$N \leq A + \frac{1}{2} L + 1$ (cf. Nosarzewska \cite{N});
\item[$(b)$]
for a compact convex region in $\mathbb R^3$ of volume $V$
whose boundary is a differential surface of class $C^2$
with surface area $S$ and total mean curvature $M$ then 
$N \leq V + (1/2) S + (1/\pi) M + 1$  (cf. Overhagen \cite{O}).
\end{itemize}

We apply these results in the following way.
Given a compact convex subset $K$ in $\mathbb R^g$,
for every scale factor $t \in \mathbb R_{\geq 0}$ denote by $N(t)$
the number of lattice points in the deformed region $\sqrt t \, K$. Then:
\begin{itemize}
\item[$(a')$] \label{nosarzewska}
in the setting of $(a)$ one has
$N(t) \leq A\, t + \frac{L}{2}\, t^{1/2} + 1$;
\item[$(b')$] \label{overhagen}
in the setting of $(b)$ one has
$N(t) \leq V\, t^{3/2} + \frac{S}{2}\, t + \frac{M}{\pi}\, t^{1/2} + 1$.
\end{itemize}

In both cases the leading term of the bounding function is $V\, t^{g/2}$
where $V$ is the $g$-dimensional volume.
We emphasize that the inequalities above are valid for arbitrary $t$. 
But if one is content with asymptotic estimates
in the form $N(t) = V\, t^{g/2} + O(t^e)$, holding for $t \gg 0$,
the exponent $e$ may be lowered, and precisely:
\begin{itemize}
\item[$-$]
in the setting of $(a)$ one can take
$e = \frac{131}{416} + \epsilon$ (cf. Huxley \cite{Hu});
\item[$-$]
in the setting of $(b)$, in the special case of a rational ellipsoid, one can take
$e = \frac{21}{32} + \epsilon$ (cf. Chamizo \cite{C1}).
\end{itemize}

\subsection{Bounds for the counting function}

\begin{prop} \label{Ndim2}
For $g=2$ there is an upper bound of the form
$$N_{E^2}(t) \leq C\, t^3 + q(t)$$
where $C$ is a constant and $q$ is a function of order $O(t^{2})$,
and both can be given explicitely.
\end{prop}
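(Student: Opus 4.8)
The plan is to use Theorem \ref{g=2} to turn the count into a lattice-point problem in $\mathbb R^3$ and then to feed it into the estimate $(b')$ of \S\ref{numbertheory}. If $E$ has no complex multiplication only the ordinary curves $E_v$ occur, with $\deg(E_v)=mv_1^2+nv_2^2$, so $N_{E^2}(t)$ counts primitive $v\in\mathbb Z^2$ up to sign inside an ellipse and is already $O(t)$; hence I may assume $E$ has complex multiplication and work in the setting of \S\ref{CM}. By Theorem \ref{g=2} the elliptic curves of degree $\le t$ correspond to the \emph{primitive} integer vectors $(\alpha,\beta,\gamma,\eta)$ with $\alpha,\beta\ge 0$, $m\alpha+n\beta\le t$, satisfying $(\ref{alfabeta})$. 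Dropping primitivity only enlarges the count, so it suffices to bound the number of all such integer vectors.

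The first genuine step is to eliminate $\gamma$. Reading $(\ref{alfabeta})$ as the quadratic $\gamma^2+u\eta\,\gamma+(vw\eta^2-\alpha\beta)=0$, its discriminant is $4\alpha\beta-D\eta^2$ where $D:=4vw-u^2>0$ (positive by the hypothesis $u^2-4vw<0$ on $\tau$). Thus each triple $(\alpha,\beta,\eta)$ admits at most two values of $\gamma$, and an integer $\gamma$ can occur only when $4\alpha\beta-D\eta^2\ge 0$. Consequently
\[
N_{E^2}(t)\ \le\ 2\,\#\bigl(\mathbb Z^3\cap K_t\bigr),\qquad
K_t:=\{(\alpha,\beta,\eta)\in\mathbb R^3: \alpha,\beta\ge 0,\ m\alpha+n\beta\le t,\ D\eta^2\le 4\alpha\beta\}.
\]

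Next I would verify that $K_t$ is a compact convex body and scales linearly, $K_t=t\,K_1$: the constraint $D\eta^2\le 4\alpha\beta$ is homogeneous of degree two in $(\alpha,\beta,\eta)$ jointly, while $m\alpha+n\beta\le t$ rescales by $t$. Convexity holds because $\sqrt{\alpha\beta}$ is concave on the quadrant $\alpha,\beta\ge 0$, so $K_1$ is the region trapped between the graphs $\eta=\pm\tfrac{2}{\sqrt D}\sqrt{\alpha\beta}$ over the triangle $m\alpha+n\beta\le 1$. I would then compute its volume by integrating the $\eta$-length $\tfrac{4}{\sqrt D}\sqrt{\alpha\beta}$ over that triangle; the substitution $\alpha=x/m$, $\beta=y/n$ reduces the integral to the Dirichlet value $\int_{x,y\ge 0,\ x+y\le 1}\sqrt{xy}\,dx\,dy=\Gamma(3/2)^2/\Gamma(4)=\pi/24$, yielding $V:=\operatorname{vol}(K_1)=\pi/\bigl(6\sqrt D\,(mn)^{3/2}\bigr)$.

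Finally, viewing $\#(\mathbb Z^3\cap tK_1)$ as the count for the dilate $\sqrt s\,K_1$ with $s=t^2$ and invoking $(b')$ gives $\#(\mathbb Z^3\cap tK_1)\le V t^3+\tfrac12 S\,t^2+\tfrac1\pi M\,t+1$, with $S$ and $M$ the (explicitly computable) surface area and total mean curvature of $K_1$. Doubling yields the assertion with $C=2V=\pi/\bigl(3\,(mn)^{3/2}\sqrt{4vw-u^2}\bigr)$ and $q(t)=S t^2+\tfrac2\pi M\,t+2=O(t^2)$. The main obstacle is that $(b')$ as quoted presupposes a boundary of class $C^2$, whereas $\partial K_1$ is only piecewise $C^2$: the flat face lying in $m\alpha+n\beta=1$ meets the curved part $D\eta^2=4\alpha\beta$ along an edge, and the quadrant walls collapse onto the two segments where $\eta=0$ and $\alpha\beta=0$. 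I would resolve this either by approximating $K_1$ from inside and outside by $C^2$ convex bodies, changing only the $O(t^2)$ terms, or---more cleanly---by slicing: for each fixed integer $\eta$ the planar section of $K_t$ is convex with a rectifiable Jordan boundary, so Nosarzewska's estimate $(a')$ applies slice by slice, and summing over the $O(t)$ admissible $\eta$ reproduces $V t^3$ as leading term while replacing $\sum_\eta(\text{area})$ by $\operatorname{vol}(K_t)$ costs only $O(t^2)$.
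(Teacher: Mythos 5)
Your proposal is correct, and it takes a genuinely different route from the paper's proof. The paper eliminates $\beta$ rather than $\gamma$: since $\alpha\beta=Q(\gamma,\eta)$ with $Q(\gamma,\eta)=\gamma^2+u\gamma\eta+vw\eta^2$ positive definite, the triple $(\alpha,\gamma,\eta)$ determines $\beta$ whenever $\alpha\geq 1$ (and $\alpha=0$ forces the single primitive vector $(0,1,0,0)$), so $N_{E^2}(t)$ is bounded \emph{injectively} --- with no factor $2$ --- by the number of $(\alpha,\gamma,\eta)\in\mathbb Z^3$ with $0\leq\alpha\leq t'$ and $Q(\gamma,\eta)\leq\alpha(t'-\alpha)$, where $t'=[t/m]$; it then applies Nosarzewska's bound $(a')$ to each elliptical slice $\alpha=\mathrm{const}$ and sums over $\alpha$, handling the square-root sum by the trapezoidal rule (Remark \ref{trapezoidal}). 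You instead eliminate $\gamma$ by the quadratic formula (at most two roots, whence your factor $2$; reality of the roots forces $D\eta^2\leq 4\alpha\beta$ with $D=4vw-u^2>0$, correctly computed), which packages the whole count into a single convex body $K_t=tK_1\subset\mathbb R^3$ fed into Overhagen's bound $(b')$. What each buys: the paper's fibered approach entirely sidesteps the smoothness issue you rightly flag, since its slices are honest ellipses where $(a)$ applies verbatim, and it produces a closed-form bound valid for all $t$; your $3$-dimensional route is more conceptual, makes the leading constant fall out of one volume computation (your Dirichlet-integral evaluation $V=\pi/(6\sqrt{D}\,(mn)^{3/2})$ checks out), and in fact yields a slightly sharper constant, $C=\pi/(3(mn)^{3/2}\sqrt{D})$ against the paper's effective $\pi/(3m^3\sqrt{D})$ for $m\leq n$, because you retain the constraint $m\alpha+n\beta\leq t$ where the paper relaxes it to $\alpha+\beta\leq t/m$. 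Your treatment of the non-CM case coincides with the paper's.

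The one point to nail down is the repair of the $C^2$ hypothesis in $(b')$, which you identified yourself. Your slicing repair is the better of the two and, amusingly, reproduces the paper's structure with the roles of the variables exchanged (slices over $\eta$ instead of over $\alpha$). To make it airtight add one line: by Brunn--Minkowski the slice areas $A_\eta$ satisfy that $A_\eta^{1/2}$ is concave on the interval of admissible $\eta$, hence $A_\eta$ is unimodal and $\sum_{\eta\in\mathbb Z}A_\eta\leq\operatorname{vol}(K_t)+\max_\eta A_\eta=Vt^3+O(t^2)$; since each slice has perimeter $O(t)$ and there are $O(t)$ slices, the remaining Nosarzewska terms contribute $O(t^2)$, completing the argument with everything explicit. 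The smoothing repair also works --- $V$, $S$, $M$ are quermassintegrals, defined for arbitrary convex bodies, monotone under inclusion and continuous in the Hausdorff metric, so Overhagen's inequality extends by outer approximation --- but that invokes more machinery than the problem requires.
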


\begin{proof}
If $E$ is without complex multiplication, there are only ordinary elliptic curves in $E^2$,
and those with degree $\leq t$ correspond bijectively to the primitive vectors $(a,b)$ 
such that $ma^2+nb^2 \leq t$, taken up to $\pm 1$, whose number has 
an upper bound of the form $A_0t + (1/2)L_0 t^{1/2} + 1$ 
(by $(a')$ in \S \ref{numbertheory}).

Assume that $E$ has complex multiplication. 
The elliptic curves in $E^2$ with degree $\leq t$ correspond bijectively to
the primitive vectors $(\alpha, \beta, \gamma, \eta)$ in $\mathbb Z^4$
with $\alpha,\beta \geq 0$ such that 
$\alpha \beta = \gamma^2 + u \gamma \eta + vw \eta^2$ and
$m \alpha + n \beta \leq t$ (see \S \ref{g=2}).
Define $Q(\gamma,\eta) := \gamma^2 + u \gamma \eta + vw \eta^2$,
a positive definite quadratic form.
The integer $\min \{m,n\}$ is important in the following computations.
We may assume that $m \leq n$.
Then write $t = m t' + r$ with $r<m$ so that $t' =[t/m]$.

The vectors in $\mathbb Z^4$ satisfying the conditions above correspond,
forgetting $\beta$, to (possibly non-primitive) vectors $(\alpha,\gamma,\eta)$ 
in $\mathbb Z^3$ such that $0 \leq \alpha \leq t'$ and
$$Q(\gamma,\eta) \leq \alpha (t' - \alpha).$$
For $\alpha = 0$ the primitive vector $(0,1,0,0)$ corresponds to $(0,0,0)$,
while for $\alpha \geq 1$ the element $\beta = (1/\alpha) Q(\gamma,\eta)$ 
is determined by $(\alpha,\gamma,\eta)$. 
Thus the correspondence above is injective
and we have $$N_{E^2}(t) \leq R(t)$$
where $R(t)$ is the number of solutions in $\mathbb Z^3$ 
of the inequalities above. Furthermore
$$R(t) = \sum_{\alpha=0}^{t'} R(\alpha,t)$$ where $R(\alpha,t)$ 
is the number of vectors $(\gamma,\eta)$ in $\mathbb Z^2$ such that
$Q(\gamma,\eta) \leq \alpha (t' - \alpha)$.
By $(a')$ in \S \ref{numbertheory} we have
$$R(\alpha,t) \leq A\, \alpha (t' - \alpha) + 
\frac{L}{2}\, \big(\alpha (t' - \alpha) \big)^{1/2}+1$$ 
where $A = \pi (vw - (1/4)u^2)^{-1/2}$ is the area of the region 
$Q(\gamma,\eta) \leq 1$ in $\mathbb R^2$
and $L$ is the length of the bounding ellipse.

We have to estimate the sum over $\alpha$ of terms in the formula above.
For the first summation we have an exact formula
$$\sum_{0}^{t'} \alpha (t' - \alpha) = 
\frac{1}{6} t'(t'+1)(t'-1).$$
For the second summation we use a basic approximation method,
explained in Remark \ref{trapezoidal} below,
and we find the following upper bound
$$\sum_{0}^{t'} [\alpha (t' - \alpha)]^{1/2} \leq 
\frac{\pi}{8}\, {t'}^2 - \frac{t'-2}{6t'}.$$
Summing up it follows that
$$R(t) \leq  A \left(\frac{1}{6} {t'}^3 - \frac{1}{6} t' \right) + 
\frac{L}{2} \left(\frac{\pi}{8} {t'}^2 - \frac{t'-2}{6t'} \right) +  t',$$
and this in turn is bounded above using $t' \leq t/m$.
\end{proof}

\begin{remar} \label{trapezoidal} \em
In the interval $[0,t]$, with $t$ a positive integer, for the function $f(x) := {x(t-x)}^{1/2}$, 
applying the approximation method known as the 
'trapezoidal rule', in the interval $[1,t-1]$ and for $t \geq 2$, we have that 
$\int_{1}^{t-1} f(x)dx - \sum_{n=1}^{t-1} f(n) = - \frac{t-2}{12} f''(\xi)$
for some $\xi$ in $[1,t-1]$; since for $f''$ the maximum value is $f''(t/2) = - 2/t$, 
and since $\int_{0}^{t} f(x)dx = \frac{\pi}{8}t^2$, it follows that
$\sum_{n=0}^{t} f(n) \leq \frac{\pi}{8}\, t^2 - \frac{t-2}{6t}$
that clearly also holds for $t=1$.
\end{remar}

\begin{prop} \label{Ndim3}
For $g=3$ there is an upper bound of the form
$$N_{E^3}(t) \leq C t^5 + q(t)$$ 
where $C$ is a constant and $q$ is a function of order $O(t^{4})$,
and both can be given explicitely.
\end{prop}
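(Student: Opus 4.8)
The plan is to follow the pattern already used for Proposition \ref{Ndim2}. When $E$ has no complex multiplication only ordinary curves occur, and those of degree $\leq t$ correspond to primitive vectors $(v_1,v_2,v_3)$ with $m_1v_1^2+m_2v_2^2+m_3v_3^2\leq t$; their number is $O(t^{3/2})$ (by $(b')$ of \S\ref{numbertheory}), far below the asserted bound, so the substance lies in the complex multiplication case. There, by Theorem \ref{g=3}, the curves of degree $\leq t$ correspond to the primitive integer vectors $(\alpha,\beta,\gamma,\delta,\epsilon,\zeta,\eta,\theta,\iota)$ with $\alpha,\beta,\gamma\geq 0$ subject to $(\ref{alfabetagamma})$, $(\ref{cubic})$ and $m\alpha+n\beta+p\gamma\leq t$. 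Writing $Q(x,y):=x^2+uxy+vwy^2$ for the positive definite form already met in \S\ref{g=2}, the three relations $(\ref{alfabetagamma})$ read $\alpha\beta=Q(\delta,\eta)$, $\alpha\gamma=Q(\epsilon,\theta)$, $\beta\gamma=Q(\zeta,\iota)$.

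The key reduction is to eliminate four of the nine coordinates. On the open stratum $\alpha\beta\gamma\neq 0$ the first and third relations give $\beta=Q(\delta,\eta)/\alpha$ and $\gamma=Q(\epsilon,\theta)/\alpha$, so $\beta$ and $\gamma$ are already determined by $(\alpha,\delta,\eta,\epsilon,\theta)$. The crucial observation is that, regarded as an equation in the remaining pair $(\zeta,\iota)$, the cubic relation $(\ref{cubic})$ is in fact affine linear: each monomial on its right-hand side has degree exactly one in $(\zeta,\iota)$, while the left-hand side $2\alpha\beta\gamma$ is a constant. Combined with the second relation $Q(\zeta,\iota)=\beta\gamma$, which confines $(\zeta,\iota)$ to a genuine ellipse, this forces $(\zeta,\iota)$ onto the intersection of a line and a nondegenerate conic, hence to take at most two values. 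Consequently the map sending a solution to $(\alpha,\delta,\eta,\epsilon,\theta)$ is at most two-to-one, and I would bound the contribution of this stratum by $2M(t)$, where $M(t)$ counts the $(\alpha,\delta,\eta,\epsilon,\theta)\in\mathbb Z^5$ with $\alpha\geq 1$ and $m\alpha+\frac{n}{\alpha}Q(\delta,\eta)+\frac{p}{\alpha}Q(\epsilon,\theta)\leq t$ (dropping the integrality of $\beta,\gamma$ and the existence of $(\zeta,\iota)$ only enlarges this count, so the inequality is safe). The three degenerate strata $\alpha=0$, $\beta=0$, $\gamma=0$ each collapse to a two-dimensional problem of the type treated in Proposition \ref{Ndim2} and contribute only $O(t^3)$.

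It remains to estimate $M(t)$, and here I would imitate the summation of Proposition \ref{Ndim2}. Fixing $(\delta,\eta)$ and putting $P:=Q(\delta,\eta)$, the inequality (after clearing $\alpha$) becomes $Q(\epsilon,\theta)+\frac{m}{p}\alpha^2-\frac{t}{p}\alpha+\frac{nP}{p}\leq 0$, which describes a solid ellipsoid in the variables $(\alpha,\epsilon,\theta)$, since the quadratic part $Q(\epsilon,\theta)+\frac{m}{p}\alpha^2$ is positive definite. Applying the three-dimensional estimate $(b')$ of \S\ref{numbertheory} to this ellipsoid and then summing the resulting volume, surface and curvature terms over the lattice points $(\delta,\eta)$ with $nP\leq\tfrac14 t^2/m$ produces the bound; alternatively one may slice once more and use only the planar estimate $(a')$, exactly as before. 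The leading contribution comes from the volumes and reduces, after passing to integrals, to the Beta-type integral $\int_0^{t/m}\alpha^2(t-m\alpha)^2\,d\alpha=\tfrac{1}{30m^3}\,t^5$, whence the main term $Ct^5$ with an explicit constant $C$ proportional to $A^2/(m^3np)$, where $A=\pi(vw-\tfrac14u^2)^{-1/2}$ is the area of $\{Q\leq 1\}$.

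The main obstacle I expect is bookkeeping rather than conceptual. One must carry the surface-area and mean-curvature error terms furnished by $(b')$ (which themselves depend on $(\delta,\eta)$ through $P$) through the outer summation and verify that their total is genuinely $O(t^4)$ with a constant one can write down, much as Remark \ref{trapezoidal} was needed to control the square-root sum for $g=2$. A secondary point requiring care is that $(b')$ is stated for a dilation of a \emph{fixed} convex body, whereas here the ellipsoid is centred off the origin (at $\alpha=t/(2m)$) and its shape varies with $P$; one must either normalise it into that form or invoke a version of $(b')$ for a shifted, $P$-dependent body, checking that the constants stay uniformly controlled. Once these errors are assembled the stated bound $N_{E^3}(t)\leq Ct^5+q(t)$ with $q(t)=O(t^4)$ follows.
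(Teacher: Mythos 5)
Your proposal is correct, and it reaches the bound by a genuinely different decomposition than the paper's. Both arguments rest on the same two pillars — the correspondence of Theorem \ref{g=3} and the fact that (\ref{cubic}) is trilinear in the pairs $(\delta,\eta)$, $(\epsilon,\theta)$, $(\zeta,\iota)$, hence affine linear in whichever pair is left to the end — but you eliminate $(\zeta,\iota)$ where the paper eliminates $(\epsilon,\theta)$, and, more substantively, you count differently. The paper bounds the solutions by a \emph{product} of two decoupled counts: quadruples $(\alpha,\beta,\delta,\eta)$ with $\alpha\beta=Q(\delta,\eta)$, $m\alpha+n\beta\leq t$ (of order $O(t^3)$, reusing Proposition \ref{Ndim2} verbatim), times pairs $(\zeta,\iota)$ with $Q(\zeta,\iota)\leq[t/n][t/p]$ (of order $O(t^2)$, determining $\gamma$), giving $C=\frac13\frac{A^2}{m^3np}$ with only the one-dimensional error sums already handled in Remark \ref{trapezoidal}. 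You instead count the five free coordinates $(\alpha,\delta,\eta,\epsilon,\theta)$ \emph{jointly} against $m\alpha+\frac{n}{\alpha}Q(\delta,\eta)+\frac{p}{\alpha}Q(\epsilon,\theta)\leq t$; your Beta integral is right and yields the sharper constant $\frac{A^2}{30m^3np}$, a factor $10$ below the paper's, at the cost of the heavier bookkeeping you flag. That bookkeeping does close: your worry about $(b')$ is immaterial since Overhagen's bound $(b)$ in \S\ref{numbertheory} applies to an arbitrary compact convex body with $C^2$ boundary, hence to each off-centre, $P$-dependent ellipsoid directly, and monotonicity of surface area and of total mean curvature under inclusion of convex bodies gives $S=O(t^2)$, $M=O(t)$ uniformly, so summing over the $O(t^2)$ admissible $(\delta,\eta)$ (with an Abel-summation step to turn $\sum_{(\delta,\eta)}V(Q(\delta,\eta))$ into $A\int V(P)\,dP$ up to $O(t^4)$) keeps $q(t)=O(t^4)$. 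One point to state explicitly: if the linear form that (\ref{cubic}) induces in $(\zeta,\iota)$ vanishes identically, the line-meets-conic argument does not literally apply, but then (\ref{cubic}) would force $2\alpha\beta\gamma=0$, excluded on your open stratum, so there are no solutions at all and the bound of two stands; your handling of the degenerate strata $\alpha\beta\gamma=0$, which collapse to the $g=2$ count and contribute $O(t^3)$, matches the paper's.
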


\begin{proof}
If $E$ is without complex multiplication, there are only ordinary elliptic curves in $E^3$,
and those with degree $\leq t$ are as many as the primitive vectors $(a,b,c)$ 
such that $ma^2+nb^2+pc^2 \leq t$, taken up to $\pm 1$, 
whose number has an upper bound of the form
$C_0t^{3/2}+q_0(t)$ (by $(b')$ in \S \ref{numbertheory}).

Assume that $E$ has complex multiplication. The elliptic curves in $E^3$ 
with degree $\leq t$ correspond bijectively to the primitive vectors
$(\alpha , \beta , \gamma , \delta , \epsilon , \zeta , \eta , \theta , \iota )$
in $\mathbb Z^9$ with $\alpha,\beta,\gamma \geq 0$ satisfying
conditions (\ref{alfabetagamma}) and (\ref{cubic}) and such that
$m \alpha + n \beta + p \gamma \leq t$ (see \S \ref{g=3}).
We may assume that $m \leq n \leq p$.

Choose $\alpha,\beta,\delta,\eta$ (possibly non-primitive) satisfying 
$\alpha\beta = Q(\delta,\eta)$ with $m \alpha + n \beta \leq t$.
The same argument used for Proposition \ref{Ndim2} above, with a slight 
modification, shows that the number of possibile choices is bounded above by
$$A \left(\frac{1}{6} {t'}^3 - \frac{1}{6} t' \right) + 
\frac{L}{2} \left(\frac{\pi}{8} {t'}^2 - \frac{t'-2}{6t'} \right) +  2t'.$$
The difference in the last summand is because for $\alpha = 1$ we are now
allowing all vectors $(0,\beta,0,0)$ with $0 \leq \beta \leq t/n$,
whose number is $[t/n]+1 \leq t'+1$, and so we add $t'$ to the quantity $R(t)$.

Then choose $\zeta,\iota$ such that $\beta$ divides $Q(\zeta,\iota)$, so 
$\gamma$ is determined from $\beta\gamma = Q(\zeta,\iota)$, and such that
$Q(\zeta,\iota) \leq \beta (1/p) (t - m\alpha - n\beta)$
which implies $Q(\zeta,\iota) \leq [t/n] [t/p]$. Define $r := [t/n] [t/p]$.
The number of possible choices is bounded above by
$$Ar + \frac{L}{2} {r}^{1/2} + 1$$
where $A$ and $L$ are the same as above
(by $(a')$ in \S \ref{numbertheory}).

The remaining unknowns $\epsilon,\theta$ have to satisfy the third
equation in (\ref{alfabetagamma}) and the equation given by (\ref{cubic}),
a quadratic and a linear equation in $\epsilon,\theta$, with at most two common solutions.
The total number of vectors in $\mathbb Z^9$ satisfying the
conditions of the Proposition is therefore bounded above by 
twice the product of the two expressions given above,
which in turn can be bounded above by
$C t^5 + q(t)$
where $C = \frac{1}{3} \frac{A^2}{m^3np}$
and $q$ is a function of order $O(t^4)$ which can be made explicit.
\end{proof}

The estimates given in the previous statements are valid for arbitrary $t$.
We remark that, using the same arguments, it is possible to give asymptotic estimates 
in the form $N_{E^g}(t) = Ct^n + O(t^e)$ with the same leading exponent $n$
and with a smaller discrepancy order $e$, relying on the asymptotic results
mentioned at the end of \S \ref{numbertheory}.

\section{Some results for Abelian varieties of small \newline dimension}
\label{Ag=23}

The previous results for self products of elliptic curves
imply analogous results for polarized Abelian varieties
of small dimensions $g=2$ and $g=3$.
We make use of the Poincar\'e reducibility theorem, in the
following form. If $A$ is a polarized Abelian variety and $B$
is an Abelian subvariety of $A$, there is a unique Abelian subvariety
$B'$ of $A$ such that the sum homomorphism $B \times B' \rightarrow A$
is an isogeny and the pullback polarization on $B \times B'$ is the product
of the pullback polarizations from $B$ and $B'$
(cf. \cite{CAV}, Theorem 5.3.5 and Corollary 5.3.6).

Let $A$ be a polarized Abelian surface. The following are basic facts,
following from the reducibility theorem.
If $A$ contains an elliptic curve $E$ then it also contains a
complementary elliptic curve $E'$ and there is an isogeny 
$E \times E' \rightarrow A$, where the induced polarization 
on $E \times E'$ is a product polarization;
if $A$ contains one more elliptic curve then $E$ and $E'$ are isogenous,
so there is an isogeny $E^2 \rightarrow A$, and the induced polarization
on $E^2$ is a product polarization again;
if there is an isogeny $E^2 \rightarrow A$ then 
there are in $A$ infinitely many elliptic curves, the images of
the ordinary elliptic curves in $E^2$;
in this case, more precisely, we have seen that there are in $A$ 
(infinitely many) extra-ordinary elliptic curves 
if and only if $E$ has complex multiplication.

Furthermore, the function $N_{A}(t)$ can be given an
upper bound which is asymptotically of order $O(t^3)$.
(The estimate for $E^2$ with a product polarization is in Proposition \ref{Ndim2}; 
if there is an isogeny $E^2 \rightarrow A$, of degree $d$, preserving
the polarizations, then $N_{A}(t) \leq N_{E^2}(dt)$ (see \S \ref{isogenies});
if $A$ contains at least three elliptic curves then there is an isogeny as above, 
and the statement follows.)

Let $A$ be a polarized Abelian threefold. The following are basic facts.
If $A$ contains two elliptic curves $E,E'$ then it contains another
elliptic curve $E''$ and there is an isogeny 
$E \times E' \times E'' \rightarrow A$ and,
because of the reducibility theorem, we may assume that
the pullback polarization on $E \times E' \times E''$ is a product polarization;
if $A$ contains one more elliptic curve then at least two of $E,E',E''$ are isogenous
and there are infinitely many ordinary  elliptic curves in $A$.
What we add to the information above, confining ourselves to a rather
coarse analysis for simplicity, is the following.
If there is in $A$ some extra-ordinary elliptic curve then
there are infinitely many extra-ordinary elliptic curves.

Furthermore, the function $N_A(t)$ can be given
an upper bound which is asymptotically of order $O(t^5)$.
(The estimate for $E^3$ is in Proposition \ref{Ndim3},
for $E^2 \times E'$ with $E,E'$ not isogenous follows from the results on surfaces;
if there is an isogeny $E^2 \times E' \rightarrow A$ 
or an isogeny $E^3 \rightarrow A$, the estimate for $A$ follows
using results in \S \ref{isogenies}); if $A$ contains at least four
elliptic curves then there is an isogeny as above, and the statement follows).

\bigskip \noindent
{\bf Remark.} When $A = J(C)$ is the Jacobian of a curve of genus $g>1$, 
there is an effective bound for the function $N_A(t)$
due to Kani (cf. \cite{K1}, Theorem 4), which is
of order $O(t^{2g^2-2})$. In particular, 
for $g=2$ of order $O(t^6)$, for $g=3$ of order $O(t^{16})$.
As the orders of bounding functions found in the present paper are quite smaller,
we are encouraged to believe that our approach may lead to some 
sharper effective bounds for arbitrary $g$.

\vfill

\noindent \small address:
Dipartimento di Matematica e Informatica, Universit\`a di Perugia, 
Via Vanvitelli 1, 06123 Perugia, Italia $-$ email: {\tt lucio.guerra@unipg.it}

\end{document}